\theoremstyle{plain}
\newtheorem{theorem}{Theorem}[section]
\newtheorem{lemma}[theorem]{Lemma}
\newtheorem{corollary}[theorem]{Corollary}
\newtheorem{observation}[theorem]{Observation}
\newtheorem{proposition}[theorem]{Proposition}
\newcommand{\bnum}{\begin{enumerate}}
\newcommand{\enum}{\end{enumerate}}
\numberwithin{equation}{section}
\DeclareMathOperator{\Irr}{Irr}
\begin{document}
\title{\textbf{Relative $g$-noncommuting graph of  finite groups}}
\author{Monalisha Sharma and Rajat Kanti Nath\footnote{Corresponding author}
}
\date{}
\maketitle
\begin{center}\small{\it
Department of Mathematical Sciences, Tezpur University,\\ Napaam-784028, Sonitpur, Assam, India.\\


Emails:\, monalishasharma2013@gmail.com and rajatkantinath@yahoo.com}
\end{center}
\begin{abstract}
Let $G$ be a finite group. For a fixed element $g$ in $G$ and a given subgroup $H$ of $G$, the relative $g$-noncommuting graph of $G$ is a simple undirected graph whose vertex set is $G$ and two vertices $x$ and $y$ are adjacent if $x \in H$ or $y \in H$ and  $[x,y] \neq g, g^{-1}$. We denote this graph by $\Gamma_{H, G}^g$. In this paper, we obtain computing formulae  for degree of any vertex in $\Gamma_{H, G}^g$ and characterize whether $\Gamma_{H, G}^g$ is a tree, star graph, lollipop or a complete graph together with some  properties of $\Gamma_{H, G}^g$ involving isomorphism of graphs. 
We also present certain relations between the number of edges in $\Gamma_{H, G}^g$ and certain generalized commuting probabilities of $G$ which give some computing formulae for the number of edges in $\Gamma_{H, G}^g$. 
Finally, we conclude this paper by deriving some bounds for the number of edges in $\Gamma_{H, G}^g$.
\end{abstract}
\medskip
\noindent {\small{\textbf{\textit{Key words:}} finite group, $g$-noncommuting graph, commuting probability}}

\noindent \small{\textbf{\textit{2010 Mathematics Subject Classification:}}  05C25,  20P05}

\section{Introduction}
Throughout the paper, $G$ is a finite non-abelian group and $Z(G) = \{z \in G: zx=xz, \forall x \in G\}$ is the center of $G$. For any subgroup $H$ of $G$, we write $Z(H, G) = \{x \in H : xy = yx, \forall y \in G\}$ and $Z(G, H) = \{x \in G: xy = yx \text{ for all } y \in H\}$, which implies $Z(G,G)=Z(G)$. For any element $x \in G$, we write $C_H(x) = \{y \in H : xy=yx\}$. Clearly, $Z(H,G)=\underset{x \in G}{\bigcap} C_H(x)$. We write $K(H, G) = \{[x,y] : x \in H \text{ and } y \in G\}$, where $[x,y] = x^{-1}y^{-1}xy$, and $[H, G] = \langle K(H, G)\rangle$. Therefore, $[G, G] = G'$ is the commutator subgroup of $G$.

The non-commuting graph of $G$, denoted by $\Gamma_G$, is a simple undirected graph with $G \setminus Z(G)$ as the vertex set and two distinct vertices $x$ and $y$ are adjacent whenever $[x, y] \neq 1$. This graph was originated due to the work of Erd$\ddot{\rm o}$s and Neumann \cite{EN76} in 1976. After that different mathematicians studied different aspects of  $\Gamma_G$ 
(see \cite{AAM06,AF15,AI12, Daraf09, DBBM10,dn-JLTA2018,ddn2018,JDSO2015, JDS2015,JMS2019,JSO2015,Mogh05,MSZZ05,  T08,VK18}),
including characterization of finite non-abelian groups. Various interesting generalizations of $\Gamma_G$ due to Erfanian and his collaborators can be found in  \cite{BEKN14,GET14,TE13, TEJ14}.
In particular, in the year 2013, Tolue and  Erfanian \cite{TE13} 
introduced relative non-commuting graph for a given subgroup $H$ of $G$ which is a simple undirected graph, denoted by $\Gamma_{H, G}$, whose vertex set is $G \setminus Z(H, G)$ and two distinct vertices $x$ and $y$ are adjacent if $x \in H$ or $y \in H$ and $[x, y] \neq 1$. In the year 2014, Tolue, Erfanian and Jafarzadeh  \cite{TEJ14} introduced $g$-noncommuting graph for a given element $g$ of a finite group $G$ which is denoted by $\Gamma_G^g$. Recall that $g$-noncommuting graph of a finite group $G$ is a simple undirected graph whose vertex set is $G$ and two distinct vertices $x$ and $y$ are adjacent if  $[x,y] \neq g$ and $g^{-1}$. 
Fusing the concepts of $\Gamma_{H, G}$ and $\Gamma_G^g$,
in this paper,
we introduce relative $g$-noncommuting graph of a finite group $G$. For a given subgroup $H$ of $G$ and an element $g \in G$, the relative $g$-noncommuting graph of $G$, denoted by  $\Gamma_{H,G}^g$, is defined as the simple undirected graph whose vertex set is $G$ and two distinct vertices $x$ and $y$ are adjacent if  $x \in H$ or $y \in H$ and $[x,y] \neq g$ and $g^{-1}$. Note that if $g = 1$ then the induced subgraph of $\Gamma_{H,G}^g$ on $G \setminus Z(H,G)$ is the relative non-commuting graph for a given subgroup $H$ of $G$, that is $\Gamma_{H,G}$.
Also, if $H = G$ then $\Gamma_{G,G}^g = \Gamma_G^g$. The ring theoretic analogues of $\Gamma_{H,G}$, $\Gamma_G^g$ and $\Gamma_{H,G}^g$ can be found in \cite{dbn2018,Sdn2019} and \cite{Sn20-1} respectively.

Let ${\mathcal{G}_1} + {\mathcal{G}_2}$ be the join of the graphs ${\mathcal{G}_1}$  and ${\mathcal{G}_2}$ and let $\overline{{\mathcal{G}}}$ be the complement of ${\mathcal{G}}$. Then we have the following observations, where $K_n$ is the  complete graph on $n$ vertices. 
\begin{observation}\label{deg_obs_1}
Let $H$ be a subgroup of a finite group $G$ and $g\in G$. 
\begin{enumerate}
\item If $g \notin K(H,G)$ then $\Gamma_{H, G}^g = K_{|H|} + \overline{K_{|G|-|H|}}$ and so
\[\deg(x) = \begin{cases}
|G|-1, & \mbox{if $x \in H$}\\
|H|, & \mbox{$x \in G \setminus H$.}
\end{cases}\]
\item If $K(H,G) = \{1\}$ and $g = 1$ then $\Gamma_{H, G}^g =  \overline{K_{|G|}}$.
\end{enumerate}
\end{observation}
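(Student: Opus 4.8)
The plan is to determine, directly from the definition, exactly which pairs of distinct vertices of $\Gamma_{H,G}^g$ are adjacent, and then to read off both the global structure of the graph and its degree sequence. Since adjacency of $x$ and $y$ requires at least one of them to lie in $H$, I would partition the unordered pairs into three types: both endpoints in $H$, exactly one endpoint in $H$, and neither endpoint in $H$. Pairs of the last type are never adjacent by definition, so the entire analysis reduces to deciding, for a pair with at least one endpoint in $H$, whether the condition $[x,y] \neq g, g^{-1}$ holds.

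For part (a) the crucial preliminary fact I would establish is that $K(H,G)$ is closed under inversion, i.e. $c \in K(H,G)$ implies $c^{-1} \in K(H,G)$. This follows from the commutator identity $[x,y]^{-1} = [y,x] = [x^{-1}, x^{-1}yx]$, whose right-hand side lies in $K(H,G)$ because $x^{-1} \in H$. Consequently, the hypothesis $g \notin K(H,G)$ immediately yields $g^{-1} \notin K(H,G)$ as well. Using this, I would argue as follows: if $x \in H$, then $[x,y] \in K(H,G)$, so $[x,y]$ is neither $g$ nor $g^{-1}$; and if instead $y \in H$, then $[y,x] \in K(H,G)$ is neither $g$ nor $g^{-1}$, whence $[x,y] = [y,x]^{-1}$ is neither $g^{-1}$ nor $g$. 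In every case a pair with an endpoint in $H$ is adjacent.

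This adjacency pattern is precisely that of the join $K_{|H|} + \overline{K_{|G|-|H|}}$: the $|H|$ vertices lying in $H$ are pairwise adjacent and each is joined to all $|G|-|H|$ vertices outside $H$, while no two vertices outside $H$ are adjacent. Reading off the degrees is then immediate: a vertex $x \in H$ is adjacent to every other vertex, giving $\deg(x) = |G|-1$, and a vertex $x \in G \setminus H$ is adjacent exactly to the vertices of $H$, giving $\deg(x) = |H|$, as claimed.

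For part (b), the hypothesis $K(H,G) = \{1\}$ says that every element of $H$ commutes with every element of $G$, so $[x,y] = 1$ whenever $x \in H$ or $y \in H$. Since $g = 1$, the adjacency requirement $[x,y] \neq g, g^{-1}$ becomes $[x,y] \neq 1$, which then fails for all such pairs; hence no two vertices are adjacent and $\Gamma_{H,G}^g = \overline{K_{|G|}}$. I expect the only genuine obstacle to be the inverse-closure of $K(H,G)$, which is exactly what is needed to treat the asymmetric subcase of part (a) in which only the second endpoint $y$ belongs to $H$; once the displayed commutator identity is checked, everything else is a routine bookkeeping of the adjacency relation.
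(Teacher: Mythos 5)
Your proof is correct and follows exactly the direct verification from the definition that the paper leaves implicit (the statement is presented as an Observation without any written proof). Your identity $[x,y]^{-1}=[y,x]=[x^{-1},x^{-1}yx]$ correctly supplies the one non-obvious point, the inverse-closure of $K(H,G)$, which is needed to conclude $g^{-1}\notin K(H,G)$ and hence handle the case where only the second endpoint of a pair lies in $H$.
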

\begin{observation}\label{deg_obs_2}
Let $H$ be a subgroup of a finite group $G$ and $g\in G \setminus K(H,G)$. Then
\begin{enumerate}
\item $\Gamma_{H, G}^g$ is a tree if and only if $H = \{1\}$ and $|H| = |G| = 2$.
\item $\Gamma_{H, G}^g$ is a star graph if and only if $H = \{1\}$.
\item $\Gamma_{H, G}^g$ is a complete graph if and only if $H = G$.
\end{enumerate}
\end{observation}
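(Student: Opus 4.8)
The plan is to reduce all three equivalences to the explicit description of $\Gamma_{H,G}^g$ furnished by Observation~\ref{deg_obs_1}(a). Since $g \in G \setminus K(H,G)$, that result gives $\Gamma_{H,G}^g = K_{|H|} + \overline{K_{|G|-|H|}}$: the $|H|$ vertices lying in $H$ are pairwise adjacent and each is adjacent to every vertex of $G \setminus H$, while no two vertices of $G \setminus H$ are adjacent. Moreover $\deg(x) = |G|-1$ for $x \in H$ and $\deg(x) = |H|$ for $x \in G \setminus H$. Each of (a)--(c) will follow by reading off a combinatorial feature of this complete split graph, using the standing hypothesis that $G$ is non-abelian (so $|G| \geq 6$) together with Lagrange's theorem to clear away degenerate configurations.

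For (c), completeness amounts to the absence of a non-adjacent pair; the only non-adjacent pairs lie in $G \setminus H$, so $\Gamma_{H,G}^g$ is complete iff $|G \setminus H| \leq 1$. If $H$ were proper then $|H| \mid |G|$ would give $|H| \leq |G|/2$ and hence $|G \setminus H| \geq |G|/2 \geq 3$, so completeness forces $H = G$; the converse is immediate since $H = G$ yields $K_{|G|}$. For (b), a star graph on at least three vertices has a unique vertex of degree exceeding $1$. Here every vertex of $H$ has degree $|G|-1 \geq 5$, so if $|H| \geq 2$ there are at least two such vertices and the graph is not a star. Thus the star condition forces $|H| = 1$, i.e. $H = \{1\}$; conversely $H = \{1\}$ gives $K_1 + \overline{K_{|G|-1}} = K_{1,\,|G|-1}$, a star.

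For (a), I would test the two defining properties of a tree against the join. Connectedness always holds, since each vertex of $G \setminus H$ is joined to the nonempty clique on $H$. Acyclicity is decisive: if $|H| \geq 3$ the clique $K_{|H|}$ contains a triangle, and if $|H| = 2$ with some $v \in G \setminus H$ then the two vertices of $H$ together with $v$ form a triangle. Hence the only acyclic cases are $|H| = 1$, giving the star $K_{1,\,|G|-1}$, and $|H| = 2$ with $G \setminus H = \emptyset$, giving the single edge $K_2$ with $|H| = |G| = 2$; these are precisely the two configurations recorded in the statement. Under the non-abelian hypothesis the latter cannot occur, so the tree case reduces to $H = \{1\}$.

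The calculations throughout are routine; the one point needing care---and the main (if modest) obstacle---is eliminating the boundary configurations. This is where the group structure enters: Lagrange's theorem is used in (c) to upgrade $|G \setminus H| \leq 1$ to $H = G$, and the non-abelian hypothesis $|G| \geq 6$ guarantees both the existence of an external vertex for the triangle in (a) when $|H| = 2$ and the presence of at least three vertices needed for the star argument in (b).
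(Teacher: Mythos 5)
Your reduction to the complete split graph $K_{|H|}+\overline{K_{|G|-|H|}}$ of Observation~\ref{deg_obs_1}(a) is exactly the intended route---the paper states this observation without proof, and your triangle, degree and non-adjacent-pair readings are the right combinatorial core. The genuine flaw is your appeal to ``the standing hypothesis that $G$ is non-abelian (so $|G|\geq 6$).'' That convention is adopted in the paper only \emph{after} Observations~\ref{deg_obs_1} and~\ref{deg_obs_2}, and it restricts attention to $g\in K(H,G)$---the opposite of the present hypothesis $g\in G\setminus K(H,G)$---precisely because these two observations dispose of the excluded cases. So here $G$ is an arbitrary finite group, possibly abelian; indeed the case $|H|=|G|=2$ appearing in part (a) forces $G\cong \Z_2$, which is abelian, so the statement cannot even be read under your imported hypothesis. (The ``and'' in part (a) is evidently a typo for ``or,'' as your own case analysis confirms.)

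Concretely, two steps fail. In (a), your closing sentence ``under the non-abelian hypothesis the latter cannot occur, so the tree case reduces to $H=\{1\}$'' discards half of the equivalence you are supposed to prove; the correct conclusion is the disjunction you had already derived, namely that $\Gamma_{H,G}^g$ is a tree iff $H=\{1\}$ or $|H|=|G|=2$. In (c), the inequality $|G\setminus H|\geq |G|/2\geq 3$ is unavailable; Lagrange alone shows that $H$ proper with $|G\setminus H|\leq 1$ forces $|H|=|G|-1$ to divide $|G|$, hence $|G|=2$ and $H=\{1\}$---and in that configuration $\Gamma_{H,G}^g=K_2$ \emph{is} complete although $H\neq G$, so the literal ``only if'' of (c) (and likewise of (b), where $G\cong\Z_2$ with $H=G$ yields the star $K_2=K_{1,1}$ with $H\neq\{1\}$) has a degenerate counterexample at $|G|=2$ that the paper's statement silently ignores. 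A correct write-up should run your split-graph argument for arbitrary finite $G$, retain the disjunction in (a), and explicitly flag or exclude the $|G|=2$ configurations in (b) and (c), rather than leaning on $|G|\geq 6$.
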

\noindent Note that if $G$ is abelian or $H=Z(H,G)$ then $K(H,G) = \{1\}$. Therefore, in view of Observation \ref{deg_obs_1}, we shall consider $G$ to be non-abelian, $H$ to be a subgroup of $G$ such that $H \ne Z(H,G)$ and $g \in K(H,G)$ throughout this paper.

 In Section 2, we obtain computing formulae  for degree of any vertex in $\Gamma_{H, G}^g$ and characterize whether $\Gamma_{H, G}^g$ is a tree, star graph, lollipop or a complete graph together with some  properties of $\Gamma_{H, G}^g$ involving isomorphism of graphs. In Section 3, 
we  obtain the number of edges in $\Gamma_{H, G}^g$ using ${\Pr}_g(H, G)$, which is the probability (introduced and studied in \cite{das-nath-10,nath_yadav}) that the commutator of a randomly chosen pair of elements $(x, y) \in H \times G$  equals $g$. We shall conclude this paper with some bounds for  the number of edges in $\Gamma_{H, G}^g$.


\section{Vertex degree and other properties}
For any vertex $x$ in $\Gamma_{H, G}^g$, we write $\deg(x)$ to denote the degree of $x$. In this section we first obtain computing formula for $\deg(x)$ in terms of  $|G|$, $|H|$ and  the orders of the centralizers of $x$.  

\begin{theorem}\label{deg_prop_1}
Let $x \in H$ be any vertex in  $\Gamma_{H, G}^g$.  
\begin{enumerate}
\item If $g=1$ then $\deg(x)= |G| - |C_G(x)|.$	
	
\item If $g \neq 1$ and $g^2 \neq 1$  then

$\deg(x) = \begin{cases}
|G| - |C_G(x)| - 1, \text{ if } x \text{ is conjugate to } xg \text{ or } xg^{-1}\\
|G| - 2|C_G(x)| - 1,\text{ if } x \text{ is conjugate to } xg \text{ and } xg^{-1}.   
\end{cases}$
	
\item If $g \neq 1$ and $g^2 = 1$  then $\deg(x) = |G| - |C_G(x)| - 1$, whenever $x$ is conjugate to $xg$.      
\end{enumerate}
\end{theorem}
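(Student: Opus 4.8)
The plan is to count, for a fixed vertex $x \in H$, the non-neighbours of $x$ and subtract from $|G|-1$. Since $x \in H$, the clause ``$x \in H$ or $y \in H$'' in the adjacency rule is automatically satisfied, so a distinct vertex $y$ fails to be adjacent to $x$ precisely when $[x,y] = g$ or $[x,y] = g^{-1}$. Thus I would introduce the set $T_x(a) = \{y \in G : [x,y] = a\}$ for $a \in \{g, g^{-1}\}$ and write
\[
\deg(x) = |G| - 1 - \bigl| (T_x(g) \cup T_x(g^{-1})) \setminus \{x\} \bigr|,
\]
where the $\setminus \{x\}$ is needed because $[x,x] = 1$, so that $x$ itself lies in $T_x(g) \cup T_x(g^{-1})$ exactly when $g = 1$ --- the one point where the three cases genuinely diverge.

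The key step is a counting lemma for $|T_x(a)|$. Writing $[x,y] = x^{-1}(y^{-1}xy)$, the equation $[x,y] = a$ is equivalent to $y^{-1}xy = xa$; hence $T_x(a)$ is nonempty if and only if $xa$ is conjugate to $x$ in $G$. When $T_x(a)$ is nonempty, I would fix $y_0 \in T_x(a)$; then any $y \in T_x(a)$ satisfies $y^{-1}xy = y_0^{-1}xy_0$, which rearranges to $(y_0 y^{-1})x = x(y_0 y^{-1})$, forcing $y_0 y^{-1} \in C_G(x)$. Conversely every element of the right coset $C_G(x)y_0$ lies in $T_x(a)$, so $T_x(a)$ is exactly that coset and $|T_x(a)| = |C_G(x)|$. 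In summary, $|T_x(a)|$ equals $|C_G(x)|$ when $x$ is conjugate to $xa$ and $0$ otherwise.

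With this lemma the three cases follow by elementary bookkeeping. If $g = 1$ then $T_x(g) = T_x(g^{-1}) = C_G(x)$, a set that contains $x$; removing $x$ leaves $|C_G(x)| - 1$ non-neighbours, giving $\deg(x) = |G| - |C_G(x)|$, which is part (a). If $g \neq 1$ and $g^2 \neq 1$ then $g \neq g^{-1}$, so $T_x(g)$ and $T_x(g^{-1})$ are disjoint and neither contains $x$; applying the lemma to $a = g$ and $a = g^{-1}$ and summing their sizes yields $|G| - |C_G(x)| - 1$ when $x$ is conjugate to exactly one of $xg, xg^{-1}$ and $|G| - 2|C_G(x)| - 1$ when it is conjugate to both, which is part (b). Finally, if $g \neq 1$ but $g^2 = 1$ then $g = g^{-1}$, so $T_x(g) \cup T_x(g^{-1}) = T_x(g)$ with $x \notin T_x(g)$, and the lemma gives $\deg(x) = |G| - |C_G(x)| - 1$ whenever $x$ is conjugate to $xg$, which is part (c).

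I expect the main obstacle to be the counting lemma, specifically verifying cleanly that $T_x(a)$ is \emph{exactly} one coset of $C_G(x)$ rather than merely contained in one, and translating the commutator condition $[x,y]=a$ into the conjugacy condition $y^{-1}xy = xa$. The remaining subtlety is purely combinatorial: tracking whether $x$ itself should be excluded from the non-neighbour count, which is the sole reason part (a) carries $|G| - |C_G(x)|$ while parts (b) and (c) carry an extra $-1$.
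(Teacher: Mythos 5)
Your proposal is correct and follows essentially the same route as the paper: both count non-neighbours of $x$ by observing that the fibre $\{y \in G : y^{-1}xy = xa\}$, when nonempty, is exactly a right coset of $C_G(x)$ and hence has size $|C_G(x)|$, then adjust for whether $x$ itself lies among the non-neighbours. Your only (cosmetic) improvement is packaging this as a single counting lemma applied uniformly to $a = g, g^{-1}$, with the disjointness of the two fibres for $g \neq g^{-1}$ coming for free since they are fibres of distinct values, where the paper repeats the coset argument within each case.
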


\begin{proof}
\noindent (a) Let $g = 1$. Then $\deg(x)$ is the number of $y \in G$ such that $xy \ne yx$. Hence, $\deg(x)= |G| - |C_G(x)|$.

\noindent (b) Let  $g \neq 1$ and  $g^2 \neq 1$. Then $g \neq g^{-1}$.  Suppose that $x$  is conjugate to $xg$ or $xg^{-1}$  but not both. Without any loss we assume that $x$ is conjugate to $xg$. Then
there exits $y \in G$ such that $y^{-1}xy = xg$, that is  $[x, y] = x^{-1}y^{-1}xy = g$. Therefore, the set $S_g :=\{y \in G: y^{-1}xy = xg\}$ is non-empty. Also, for any $\alpha \in S_g$ we have $[x,\alpha]=g$ which gives that $\alpha$ is not adjacent to $x$. 
Thus, $\alpha \in G$ is not adjacent to $x$ if and only if $\alpha = x$ or $\alpha \in S_g$. Therefore, the number of  vertices  not adjacent to $x$ is equal to $|S_g| + 1$.

Let $y_1 \in S_g$ and $y_2 \in C_G(x)y_1$. Then $y_2 = uy_1$ for some $u \in C_G(x)$. We have
\[
y_2^{-1}xy_2 = y_1^{-1}u^{-1}x uy_1 = y_1^{-1}xy_1 = xg.
\]
Therefore, $y_2 \in S_g$ and so $C_G(x)y_1 \subseteq S_g$. Suppose that $y_3 \in S_g$. Then $y_1^{-1}xy_1 = y_3^{-1}xy_3$ which implies $y_3y_1^{-1} \in C_G(x)$. Therefore, $y_3 \in C_G(x)y_1$  and so $S_g \subseteq C_G(x)y_1$. Thus  $S_g = C_G(x)y_1$ and so $|S_g| = |C_G(x)|$. Hence, the number of  vertices  not adjacent to $x$ is equal to $|C_G(x)| + 1$
and so $\deg(x) = |G| - |C_G(x)| - 1$.



If $x$ is conjugate to $xg$ and $xg^{-1}$ then $S_g \cap S_{g^{-1}} = \emptyset$, where $S_{g^{-1}}=\{y \in G: y^{-1}xy = xg^{-1}\}$ and $|S_{g^{-1}}| = |C_G(x)|$. In this case, $\alpha \in G$ is not adjacent to $x$ if and only if $\alpha = x$ or $\alpha \in S_g \cup S_{g^{-1}}$. Therefore, the number of  vertices  not adjacent to $x$ is equal to $|S_g| + |S_{g^{-1}}|  + 1 = 2|C_G(x)| + 1$. 
Hence, $\deg(x) = |G| - 2|C_G(x)| - 1$.

\noindent (c) Let  $g \neq 1$ and  $g^2 = 1$. Then $g = g^{-1}$ and so $xg = xg^{-1}$. Now, if $x$ is conjugate to $xg$ then, as shown in the proof of part (b), we have
$\deg(x) = |G| - |C_G(x)| - 1$. 
\end{proof}

\begin{theorem}\label{deg_prop_2}
Let $x \in G \setminus H$ be any vertex in  $\Gamma_{H, G}^g$.  
\begin{enumerate}
\item If $g=1$ then $\deg(x)= |H| - |C_H(x)|.$
	
\item If $g \neq 1$ and $g^2 \neq 1$  then\\
$\deg(x) = \begin{cases}
|H| - |C_H(x)|, & \mbox{if $x$ is conjugate to $xg$ or $xg^{-1}$ for}\\
& \mbox{some element in $H$}. \\
|H| - 2|C_H(x)|, & \mbox{if $x$ is conjugate to $xg$ and $xg^{-1}$ for}\\
& \mbox{some element in $H$}.
\end{cases}$

\item If $g \neq 1$ and $g^2 = 1$  then $\deg(x) = |H| - |C_H(x)|$, whenever $x$ is conjugate to $xg$, for some element in H.
\end{enumerate}
\end{theorem}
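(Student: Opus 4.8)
The plan is to mirror the argument used for Theorem~\ref{deg_prop_1}, but with two structural changes dictated by the fact that $x \in G \setminus H$. Since $x \notin H$, a vertex $y$ can be adjacent to $x$ only if $y \in H$; thus the candidate neighbours of $x$ form the set $H$ rather than all of $G$. Moreover, because $x \notin H$, the vertex $x$ itself never appears among these candidates, so \emph{no} self-adjacency correction term ``$-1$'' is needed (in contrast to Theorem~\ref{deg_prop_1}). Hence $\deg(x)$ equals the number of $y \in H$ satisfying $[x,y] \neq g$ and $[x,y] \neq g^{-1}$.

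For part (a), when $g = 1$ the condition reduces to $[x,y] \neq 1$, i.e. $xy \neq yx$. The number of $y \in H$ commuting with $x$ is exactly $|C_H(x)|$, so immediately $\deg(x) = |H| - |C_H(x)|$.

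For parts (b) and (c) I would introduce the sets $T_g := \{y \in H : y^{-1}xy = xg\}$ and $T_{g^{-1}} := \{y \in H : y^{-1}xy = xg^{-1}\}$, noting that $y \in T_g$ precisely when $[x,y] = g$, and similarly for $T_{g^{-1}}$. The crux of the proof is the coset computation $|T_g| = |C_H(x)|$ whenever $T_g \neq \emptyset$ (that is, whenever $x$ is conjugate to $xg$ by an element of $H$). Fixing $y_1 \in T_g$, a direct calculation shows $C_H(x) y_1 \subseteq T_g$, since conjugating first by $u \in C_H(x)$ leaves $x$ fixed; conversely, if $y_3 \in T_g$ then $y_3 y_1^{-1} \in C_G(x) \cap H = C_H(x)$, giving $T_g \subseteq C_H(x) y_1$. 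Thus $T_g = C_H(x) y_1$ is a single right coset and $|T_g| = |C_H(x)|$; the same argument yields $|T_{g^{-1}}| = |C_H(x)|$ when $T_{g^{-1}} \neq \emptyset$.

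The conclusions then follow by bookkeeping. In part (b), since $g^2 \neq 1$ forces $g \neq g^{-1}$, the sets $T_g$ and $T_{g^{-1}}$ are disjoint; subtracting from $|H|$ the elements landing in $T_g \cup T_{g^{-1}}$ gives $\deg(x) = |H| - |C_H(x)|$ when $x$ is conjugate in $H$ to exactly one of $xg, xg^{-1}$, and $\deg(x) = |H| - 2|C_H(x)|$ when it is conjugate to both. In part (c), $g^2 = 1$ gives $g = g^{-1}$, so $xg = xg^{-1}$ and $T_g, T_{g^{-1}}$ collapse to a single set of size $|C_H(x)|$, yielding $\deg(x) = |H| - |C_H(x)|$. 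The only delicate point throughout is ensuring that the conjugating elements remain inside $H$ when establishing $|T_g| = |C_H(x)|$; this is exactly where the identity $C_G(x) \cap H = C_H(x)$ is used, and it is the step I expect to require the most care.
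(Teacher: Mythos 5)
Your proposal is correct and follows essentially the same route as the paper's proof: restricting candidate neighbours to $H$ (with no ``$-1$'' correction since $x \notin H$), introducing the sets $T_g$ and $T_{g^{-1}}$, and showing each nonempty one is a single right coset $C_H(x)y_1$ of size $|C_H(x)|$, then counting. Your explicit remark that the converse inclusion rests on $y_3y_1^{-1} \in C_G(x) \cap H = C_H(x)$ is a small clarification of the same step in the paper, not a different argument.
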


\begin{proof}
Note that $x$ is not adjacent to any vertex in $G \setminus H$. Therefore, to prove the result, we consider only elements of $H$ and check whether they are adjacent/non-adjacent to $x$. 

\noindent (a) Let $g = 1$. Then $\deg(x)$ is the number of $y \in H$ such that $xy \ne yx$. Hence, $\deg(x)= |H| - |C_H(x)|$.

\noindent (b) Let  $g \neq 1$ and  $g^2 \neq 1$. Then $g \neq g^{-1}$. Suppose that $x$  is conjugate to $xg$ or $xg^{-1}$  (but not both) for some elements in $H$. Without any loss we assume that $x$ is conjugate to $xg$ for some elements in $H$. Then 
there exits $y \in H$ such that $y^{-1}xy = xg$, that is  $[x, y] = x^{-1}y^{-1}xy = g$. Therefore, the set $T_g :=\{y \in H : y^{-1}xy = xg\}$ is non-empty. Also, for any $\alpha \in T_g$ we have $[x,\alpha]=g$ which gives that $\alpha$ is not adjacent to $x$. 
Thus, $\alpha \in H$ is not adjacent to $x$ if and only if $\alpha \in S_g$. Therefore, the number of  vertices  not adjacent to $x$ is equal to $|T_g|$.

Let $y_1 \in T_g$ and $y_2 \in C_H(x)y_1$. Then $y_2 = uy_1$ for some $u \in C_H(x)$. We have
\[
y_2^{-1}xy_2 = y_1^{-1}u^{-1}x uy_1 = y_1^{-1}xy_1 = xg.
\]
Therefore, $y_2 \in T_g$ and so $C_H(x)y_1 \subseteq T_g$. Suppose that $y_3 \in T_g$. Then $y_1^{-1}xy_1 = y_3^{-1}xy_3$ which implies $y_3y_1^{-1} \in C_H(x)$. Therefore, $y_3 \in C_H(x)y_1$  and so $T_g \subseteq C_H(x)y_1$. Thus  $T_g = C_H(x)y_1$ and so $|T_g| = |C_H(x)|$. Hence, the number of  vertices  not adjacent to $x$ is equal to $|C_H(x)|$
and so $\deg(x) = |H| - |C_H(x)|$.


If $x$ is conjugate to $xg$ and $xg^{-1}$ for some elements of $H$ then $T_g \cap T_{g^{-1}} = \emptyset$, where $T_{g^{-1}}=\{y \in H : y^{-1}xy = xg^{-1}\}$ and $|T_{g^{-1}}|  = |C_H(x)|$. In this case, $\alpha \in H$ is not adjacent to $x$ if and only if $\alpha \in T_g \cup T_{g^{-1}}$. Therefore, the number of  vertices  not adjacent to $x$ is equal to $|T_g| + |T_{g^{-1}}|  = 2|C_H(x)|$. 
Hence, $\deg(x) = |H| - 2|C_H(x)|$.



\noindent (c) Let  $g \neq 1$ and  $g^2 = 1$. Then $g = g^{-1}$ and so $xg = xg^{-1}$. Now, if $x$ is conjugate to $xg$ for some elements in $H$ then, as shown in the proof of part (b), we have
$\deg(x) = |H| - |C_H(x)|$. 
%
%
\end{proof}

\noindent It is noteworthy that $g \notin K(H, G)$ if $x$ is not conjugate to $xg$ and $xg^{-1}$. Therefore, this case does not arise in 
 Theorem \ref{deg_prop_1} and Theorem \ref{deg_prop_2}. The degree of a vertex, in such case, is given by Observation \ref{deg_obs_1}. 

Now, we present some properties of $\Gamma^g_{H,G}$. The following lemmas are useful in this regard.
\begin{lemma}\label{deg_lemma_02}
If $g \ne 1$ and $H$ has an element of order $3$ then  $\Gamma^g_{H,G}$ is not triangle free.
\end{lemma}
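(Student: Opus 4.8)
The plan is to produce an explicit triangle in $\Gamma^g_{H,G}$ rather than argue abstractly. By hypothesis $H$ contains an element $a$ of order $3$, so the cyclic subgroup $\langle a\rangle = \{1, a, a^2\}$ is contained in $H$ and supplies three \emph{distinct} vertices of the graph, namely the identity $1$, the element $a$, and $a^2 = a^{-1}$. My claim is that these three vertices are pairwise adjacent, which immediately shows the graph is not triangle free.

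Next I would verify the two adjacency requirements for each of the three pairs. First, all of $1, a, a^2$ lie in $H$, so the condition ``$x \in H$ or $y \in H$'' is satisfied for every pair automatically. Second, since $1, a, a^2$ are all powers of the single element $a$, they commute pairwise, and hence
\[
[1, a] = [1, a^2] = [a, a^2] = 1 .
\]
Thus each of the three commutators equals the identity of $G$.

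The remaining point, and the only place where a hypothesis is genuinely invoked, is to confirm that this common commutator value $1$ is different from both $g$ and $g^{-1}$; this is exactly what the assumption $g \ne 1$ provides, since $g \ne 1$ also forces $g^{-1} \ne 1$. Therefore $[x,y] \ne g, g^{-1}$ for each of the pairs $\{1,a\}$, $\{1,a^2\}$, $\{a,a^2\}$, so all three pairs are adjacent and $\{1, a, a^2\}$ forms a triangle in $\Gamma^g_{H,G}$.

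I do not expect any real obstacle: the entire content lies in choosing the correct three vertices, and the order-$3$ hypothesis is precisely what guarantees three mutually commuting vertices inside $H$ (an element of order $2$ would yield only the two vertices $1$ and $a$, which is not enough to close a triangle). The construction shows more: any abelian subgroup of $H$ of order at least $3$ would serve equally well, but $\langle a\rangle$ is the most economical witness here.
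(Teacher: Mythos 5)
Your proof is correct and follows exactly the paper's argument: the paper also takes an element $x \in H$ of order $3$ and observes that $1$, $x$, $x^{-1}$ form a triangle in $\Gamma^g_{H,G}$, with your write-up merely making explicit the verification (all three vertices lie in $H$, commute pairwise, and $g \ne 1$ rules out $[x,y] = g, g^{-1}$) that the paper leaves as ``easy to see.''
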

\begin{proof}
Let $x$ be an element of $H$ having order $3$. Then, it is easy to see that the vertices $1, x$ and $x^{-1}$ forms a triangle in $\Gamma^g_{H,G}$. Hence, the lemma follows.
\end{proof}

\begin{lemma}\label{deg_lemma_2}
If $x \in Z(H,G)$ then  $\deg(x)= \begin{cases}
0, & \mbox{if $g=1$}\\
|G|-1, & \mbox{if $g \ne 1$.}\end{cases}$
\end{lemma}
\begin{proof}
By definition of $Z(H, G)$, it follows that $x \in H$ and $[x, y] = 1$ for all $y \in G$ and so $C_G(x) = G$.  Therefore, if $g = 1$ then by  Theorem \ref{deg_prop_1}(a) we have $\deg(x)= 0$.     If $g \ne 1$ then all the elements of $G$ except $x$ are adjacent to $x$. Therefore,  $\deg(x)= |G| - 1$.
\end{proof}
As a consequence of Lemma \ref{deg_lemma_2},  we have that the domination number of $\Gamma^g_{H,G}$ is one if $g \ne 1$ since $\{x\}$ is a dominating set for all $x \in Z(H, G)$. If $g$ is an element of $H$ having even order then it can be seen that $\{g\}$ is also a dominating set in $\Gamma^g_{H,G}$.  
If $g = 1$ then the domination number of $\Gamma^g_{H,G}$ is greater than or equal to $|Z(H, G)| + 1$. This lower bound is sharp because the  domination number of $\Gamma^{(1)}_{H,S_3}$ is $2 = |Z(H, S_3)| + 1$, where $H$ is any subgroup of $S_3$ of order $2$.   
If $g = 1$ then, by Lemma \ref{deg_lemma_2}, we also have that $\Gamma^g_{H,G}$ is disconnected. Hence, $\Gamma^1_{H,G}$ is not a tree, star graph and complete graph. In the following results we determine whether $\Gamma^g_{H,G}$ is a tree, star graph or complete graph if $g \ne 1$.


\begin{theorem}\label{not-tree}
Let $H$ be a  subgroup of $G$ and $|H| \ne 2$. If $g \ne 1$ then $\Gamma^g_{H,G}$ is not a tree.
\end{theorem}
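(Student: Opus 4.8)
The plan is to exhibit a triangle (a $3$-cycle) in $\Gamma^g_{H,G}$, which immediately rules out the graph being a tree, since a tree is acyclic and any graph containing a cycle is not a tree. The key structural fact I would exploit is that $g \ne 1$ forces the identity to be a universal vertex: indeed $1 \in H$ and $1$ commutes with every element of $G$, so $1 \in Z(H,G)$, and Lemma \ref{deg_lemma_2} then gives $\deg(1) = |G| - 1$. Thus $1$ is adjacent to every other vertex, and it suffices to produce a single edge joining two distinct non-identity vertices $a$ and $b$; together with $1$ these give a triangle $\{1, a, b\}$.

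Next I would record that under the standing hypotheses $|H| \ge 3$. Since $H \ne Z(H,G)$ (equivalently, since $g \in K(H,G)$ with $g \ne 1$ forces $K(H,G) \ne \{1\}$), the subgroup $H$ cannot be trivial, so $|H| \ge 2$; combining this with the hypothesis $|H| \ne 2$ yields $|H| \ge 3$. This is the only place the assumption $|H| \ne 2$ enters, and it is precisely what guarantees enough elements to form the desired edge.

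The heart of the argument is to locate two distinct non-identity elements of $H$ whose commutator is neither $g$ nor $g^{-1}$. The safest route is to find two that \emph{commute}, because then their commutator is $1$, and $1 \notin \{g, g^{-1}\}$ as $g \ne 1$. I would split into two cases. If $H$ contains an element $a$ of order at least $3$, then $a \ne a^{-1}$, both lie in $H \setminus \{1\}$, and $[a, a^{-1}] = 1$, so $a$ and $a^{-1}$ are adjacent; this is the same mechanism already used in Lemma \ref{deg_lemma_02}, and $\{1, a, a^{-1}\}$ is a triangle. Otherwise every non-identity element of $H$ has order $2$, which forces $H$ to be an elementary abelian $2$-group, whence $|H| \ge 4$; picking any two distinct non-identity elements $a, b \in H$ we get $[a,b] = 1$, so $a$ and $b$ are adjacent and $\{1, a, b\}$ is a triangle.

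The one point demanding care is the non-abelian case for $H$: one cannot simply select arbitrary distinct non-identity elements, since their commutator might coincide with $g$ or $g^{-1}$, which would destroy adjacency. Restricting attention to commuting pairs sidesteps this difficulty completely, and the dichotomy above (an element of order $\ge 3$ versus $H$ elementary abelian of order $\ge 4$) shows such a pair always exists once $|H| \ge 3$. Having produced a triangle in every case, I conclude that $\Gamma^g_{H,G}$ contains a cycle and hence is not a tree.
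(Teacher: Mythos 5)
Your proof is correct, but it takes a genuinely different route from the paper. The paper argues by contradiction through a leaf-degree analysis: if $\Gamma^g_{H,G}$ were a tree it would have a vertex of degree $1$, and the degree formulas of Theorems \ref{deg_prop_1} and \ref{deg_prop_2} then force either ($x \in H$) $|C_G(x)| = 2$ and $|G| \in \{4, 6\}$, whence $G \cong S_3$ and $H = A_3$ or $S_3$, or ($x \in G \setminus H$) $|C_H(x)| = 1$ and $|H| \in \{2, 3\}$; in every surviving case $H$ contains an element of order $3$ and Lemma \ref{deg_lemma_02} supplies a triangle. You instead construct a triangle directly and unconditionally: the identity is a universal vertex by Lemma \ref{deg_lemma_2} (since $g \ne 1$), and the dichotomy ``$H$ has an element of order at least $3$, else $H$ is elementary abelian of order at least $4$'' always yields a commuting pair of distinct non-identity elements of $H$, hence an edge (commutator $1 \ne g, g^{-1}$) completing the triangle. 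Your handling of the exponent-$2$ case is exactly what Lemma \ref{deg_lemma_02} lacks, so in effect you prove the stronger statement that $\Gamma^g_{H,G}$ contains a triangle whenever $g \ne 1$ and $|H| \ge 3$ (under the paper's standing hypotheses, which you correctly invoke to rule out $H = \{1\}$ --- necessary, since for trivial $H$ the graph is a star and hence a tree). What each approach buys: yours is shorter, avoids the degree formulas entirely, and immediately rules out much more than treeness (the graph is not triangle-free, so not bipartite, not a forest, not a star); the paper's leaf analysis, though heavier here, is reused verbatim to obtain the subsequent results on lollipops and on the nonexistence of degree-one vertices when $|H| \ne 2, 3, 6$, which your triangle argument does not by itself deliver.
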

\begin{proof}
Suppose for any  subgroup $H$ of $G$, $\Gamma^g_{H,G}$ is a tree.  Then there exits a vertex $x$ in $\Gamma^g_{H,G}$ of degree one.
Consider the following cases.

\noindent\textbf{Case 1:} $x \in H$

By Theorem  \ref{deg_prop_1} we have $\deg(x) = |G| - |C_G(x)| - 1 = 1$  or $\deg(x) = |G| - 2|C_G(x)| - 1 = 1$. That is,
\[
|G|-|C_G(x)| = 2  \text{ or } |G|-2|C_G(x)| = 2.
\]
Therefore, $|C_G(x)| = 2$ and $|G| = 4, 6$. Since $G$ is non-abelian and $|H| \ne 1, 2$, we must have  $G \cong S_3$ and $H = A_3$ or $S_3$. 
Therefore, by Lemma \ref{deg_lemma_02}, $\Gamma^g_{H,G}$ has a triangle which is a contradiction.


       
\noindent\textbf{Case 2:} $x \in G \setminus H$

By Theorem \ref{deg_prop_2} we have $\deg(x) = |H| - |C_H(x)| = 1$ or $\deg(x) = |H| - 2|C_H(x)| = 1$. Therefore, $|C_H(x)| = 1$ and $|H| = 2, 3$. However, $|H| \ne 2$ (by assumption). If $|H| = 3$ then, by Lemma \ref{deg_lemma_02}, $\Gamma^g_{H,G}$ has a triangle which is a contradiction.
%
%
Hence, the result follows.
\end{proof}
The proof of Theorem \ref{not-tree} also gives the following result.
\begin{theorem}
Let $H$ be a  subgroup of $G$ and $|H| \ne 2, 3$. If $g \ne 1$ then $\Gamma^g_{H,G}$ is not a lollipop. Further, if $|H| \ne 2, 3, 6$  then $\Gamma^g_{H,G}$ has no vertex of degree $1$.
\end{theorem}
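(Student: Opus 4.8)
The plan is to prove the two assertions in the reverse of the order stated, using the fact that a lollipop graph necessarily contains a vertex of degree $1$ (the tip of its stick), and in fact contains \emph{exactly one} such vertex. Thus the key quantity to control is the set of orders $|H|$ for which $\Gamma^g_{H,G}$ can possess a vertex of degree $1$ at all. I would extract precisely this information from the degree-$1$ analysis already carried out in the proof of Theorem \ref{not-tree}, but now \emph{without} imposing $|H| \ne 2$, keeping track of every admissible value of $|H|$.

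Concretely, suppose $x$ is a vertex of degree $1$, and split into the cases $x \in H$ and $x \in G \setminus H$ as in Theorem \ref{not-tree}. If $x \in H$, then $x \notin Z(H,G)$ (otherwise Lemma \ref{deg_lemma_2} forces $|G| = 2$), and if $x$ is conjugate to neither $xg$ nor $xg^{-1}$ then Observation \ref{deg_obs_1} gives $\deg(x) = |G| - 1$, again forcing $|G| = 2$; hence Theorem \ref{deg_prop_1} applies and, exactly as in Theorem \ref{not-tree}, yields $G \cong S_3$ with $|C_G(x)| = 2$, i.e. $x$ an involution. So $H$ must contain a transposition, which restricts $|H|$ to $\{2, 6\}$. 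If instead $x \in G \setminus H$, then Theorem \ref{deg_prop_2} (or Observation \ref{deg_obs_1} in the degenerate subcase, which forces $|H| = 1 = |Z(H,G)|$ and is excluded) forces $|H| \in \{2, 3\}$. Collecting the possibilities shows that a vertex of degree $1$ can occur only when $|H| \in \{2, 3, 6\}$; this gives the second assertion by contraposition.

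For the lollipop assertion I assume only $|H| \ne 2, 3$. When moreover $|H| \ne 6$, the paragraph above shows $\Gamma^g_{H,G}$ has no vertex of degree $1$, so it cannot be a lollipop. The remaining case $|H| = 6$ is where I expect the only real work, since there a degree-$1$ vertex can exist and the shortcut is unavailable. Here any degree-$1$ vertex must lie in $H$ (the $x \in G \setminus H$ branch needs $|H| \in \{2,3\}$), which by the first branch forces $G \cong S_3$ and, since $|H| = 6 = |G|$, forces $H = G = S_3$. I would then compute directly in $\Gamma^g_{S_3, S_3} = \Gamma^g_{S_3}$ with $g$ a $3$-cycle: each of the three transpositions is conjugate to both $xg$ and $xg^{-1}$, so by Theorem \ref{deg_prop_1} each has degree $|G| - 2|C_G(x)| - 1 = 1$. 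Thus this graph has three vertices of degree $1$, whereas a lollipop has exactly one; so it is not a lollipop.

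The main obstacle is therefore exactly the boundary case $|H| = 6$, which also explains the asymmetry between the two hypotheses ($|H| \ne 2, 3$ for the lollipop statement versus $|H| \ne 2, 3, 6$ for the absence of degree-$1$ vertices): for $|H| = 6$ one cannot rule out degree-$1$ vertices, and must instead identify the graph as $\Gamma^g_{S_3}$ and defeat the lollipop shape by counting its leaves. All other steps are the same routine divisibility arguments used in Theorem \ref{not-tree} ($|C_G(x)|$ divides $|G|$ and $|C_H(x)|$ divides $|H|$), combined with the elementary fact that the unique non-abelian group of order $6$ is $S_3$.
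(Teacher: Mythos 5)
Your proposal is correct and takes essentially the same route as the paper, whose entire proof is a one-line appeal to the degree-one analysis in the proof of Theorem \ref{not-tree}: that analysis (via Theorems \ref{deg_prop_1} and \ref{deg_prop_2}) shows a vertex of degree $1$ forces $|H| \in \{2,3,6\}$, which settles both assertions except when $|H| = 6$. Your explicit handling of $|H| = 6$ --- forcing $H = G \cong S_3$ and counting three vertices of degree $1$ in $\Gamma^g_{S_3}$ (the three transpositions), whereas a lollipop has exactly one --- is precisely the detail the paper leaves implicit, and it is genuinely needed there, since the triangle contradiction that finishes the tree case in Theorem \ref{not-tree} does not rule out a lollipop.
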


As a consequence of Theorem \ref{not-tree} we have the following results.
\begin{corollary}
Let $H$ be a  subgroup of $G$ and $|H| \ne 2$. If $g \ne 1$ then $\Gamma^g_{H,G}$ is not a star graph.
\end{corollary}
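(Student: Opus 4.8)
The plan is to deduce this directly from Theorem \ref{not-tree} by exploiting the containment of star graphs within the class of trees. The key point I would record first is the elementary graph-theoretic fact that every star graph $K_{1,n}$ is a tree: it is connected (each leaf is joined to the central vertex) and acyclic (it has $n+1$ vertices and exactly $n$ edges, with no repeated adjacencies forming a cycle). Thus the property ``being a star graph'' is strictly stronger than the property ``being a tree.''

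With that observation in place, the proof is purely logical. Since the hypotheses of the corollary, namely $|H| \ne 2$ and $g \ne 1$, coincide exactly with those of Theorem \ref{not-tree}, I would invoke that theorem to conclude that $\Gamma^g_{H,G}$ is not a tree. Taking the contrapositive of the implication ``$\Gamma^g_{H,G}$ is a star graph $\Rightarrow$ $\Gamma^g_{H,G}$ is a tree,'' it follows immediately that $\Gamma^g_{H,G}$ cannot be a star graph under these hypotheses.

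The main obstacle here is essentially nonexistent: all of the substantive content — the degree computations of Theorems \ref{deg_prop_1} and \ref{deg_prop_2}, the triangle-freeness argument via Lemma \ref{deg_lemma_02}, and the resulting restriction forcing $G \cong S_3$ — has already been carried out in the proof of Theorem \ref{not-tree}. No new case analysis on the structure of $H$ or on the order of $g$ is required, since the class of star graphs is contained in the class of trees and the hypotheses are identical. I would therefore present the argument as a one-line deduction, simply remarking that a star graph is a tree and citing Theorem \ref{not-tree}.
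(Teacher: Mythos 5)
Your proposal is correct and is exactly the paper's intended argument: the paper states this corollary without proof as ``a consequence of Theorem \ref{not-tree},'' which is precisely your observation that every star graph is a tree, so a graph that is not a tree cannot be a star. Nothing further is needed.
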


\begin{corollary}
If $g \ne 1$ and $G$ is a group of odd order then $\Gamma^g_{H,G}$ is not a tree and hence not a star.
\end{corollary}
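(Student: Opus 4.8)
The plan is to reduce the statement entirely to Theorem \ref{not-tree}, observing that the odd-order hypothesis on $G$ automatically supplies the numerical condition $|H| \neq 2$ required there. The corollary is therefore immediate once the right elementary fact is invoked, so the work is purely in checking the logical chain rather than in any new computation.

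First I would apply Lagrange's theorem: since $H$ is a subgroup of $G$ and $|G|$ is odd, $|H|$ divides $|G|$, and hence $|H|$ is odd as well. In particular $|H| \neq 2$, because $2$ is even. This is the one substantive observation, and it is exactly what converts the odd-order assumption into the hypothesis of the earlier theorem.

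With $g \neq 1$ and $|H| \neq 2$ both in hand, Theorem \ref{not-tree} applies verbatim and yields that $\Gamma^g_{H,G}$ is not a tree. For the second assertion I would simply recall that every star graph is, by definition, a tree (it is connected and contains no cycle); consequently a graph that fails to be a tree cannot be a star graph. Thus $\Gamma^g_{H,G}$ is not a star either, which is the force of the word ``hence'' in the statement.

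I do not expect any genuine obstacle: there is no case analysis, and no appeal to the centralizer-degree formulae of Theorems \ref{deg_prop_1} and \ref{deg_prop_2} beyond what is already packaged inside Theorem \ref{not-tree}. The only point worth stating explicitly is that the standing assumptions ($G$ non-abelian, $H \neq Z(H,G)$, $g \in K(H,G)$) remain in force, so that Theorem \ref{not-tree} is legitimately applicable; once that is noted, the argument closes in two short lines.
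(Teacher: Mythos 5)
Your proposal is correct and matches the paper's intended argument exactly: the corollary is stated there as an immediate consequence of Theorem \ref{not-tree}, with the odd order of $G$ forcing $|H|$ to be odd (hence $|H| \neq 2$) by Lagrange's theorem, and the ``not a star'' part following because every star graph is a tree. Nothing is missing; your explicit note that the standing assumptions keep Theorem \ref{not-tree} applicable is a reasonable bit of extra care.
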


\begin{theorem}
If $g \ne 1$ then $\Gamma^g_{H,G}$ is  a star if and only if $G \cong S_3$ and $|H| = 2$.
\end{theorem}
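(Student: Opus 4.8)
The statement is a biconditional, so the plan is to treat the two implications separately and lean on the degree computations already established.

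\emph{Necessity.} Assume $g \neq 1$ and that $\Gamma^g_{H,G}$ is a star. First I would apply the preceding corollary, which asserts that $\Gamma^g_{H,G}$ is not a star whenever $|H| \neq 2$; hence necessarily $|H| = 2$, say $H = \{1, h\}$. Because the standing hypotheses give $H \neq Z(H,G)$ while $Z(H,G)$ is a subgroup of $H$ containing $1$, this forces $Z(H,G) = \{1\}$. By Lemma \ref{deg_lemma_2}, $\deg(1) = |G| - 1$, and since $G$ is non-abelian (so $|G| \geq 6$) this exceeds $1$; hence $1$ is adjacent to every other vertex and must be the unique centre of the star, which forces every remaining vertex, in particular $h$, to be a leaf of degree $1$.

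The heart of this direction is then to extract $|G| = 6$ from $\deg(h) = 1$. Applying Theorem \ref{deg_prop_1} to $x = h$, the equation $\deg(h) = 1$ becomes either $|G| - |C_G(h)| = 2$ or $|G| - 2|C_G(h)| = 2$, regardless of the order of $g$. I would finish by a short divisibility argument using $|C_G(h)| \mid |G|$: the first equation gives $(|G|-2) \mid 2$, hence $|G| \in \{3,4\}$, both impossible (since $2 = |H| \mid |G|$ and every group of order $4$ is abelian); the second gives $(|G|-2)/2 \mid |G|$, hence $|G| \in \{4,6\}$, leaving only $|G| = 6$. A non-abelian group of order $6$ is $S_3$, which together with $|H| = 2$ proves necessity. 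I expect this divisibility step to be the main obstacle, as it is where the precise value $|G| = 6$ is singled out.

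\emph{Sufficiency.} Now assume $G \cong S_3$ and $|H| = 2$, writing $H = \{1, h\}$ with $h$ a transposition. Since the standing hypothesis gives $g \in K(H,G) \subseteq G' = A_3$ and $g \neq 1$, the element $g$ is a $3$-cycle, so $g^2 \neq 1$. As before $Z(H,G) = \{1\}$, so Lemma \ref{deg_lemma_2} yields $\deg(1) = |G| - 1 = 5$ and $1$ is adjacent to all other vertices. To compute $\deg(h)$ I would observe that $hg$ and $hg^{-1}$ are odd permutations, hence transpositions, and that all transpositions are conjugate in $S_3$; thus $h$ is conjugate to both $hg$ and $hg^{-1}$, placing us in the second branch of Theorem \ref{deg_prop_1}(b) and giving $\deg(h) = |G| - 2|C_G(h)| - 1 = 6 - 4 - 1 = 1$, using $|C_G(h)| = 2$.

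Finally I would deduce the star structure directly from the adjacency rule rather than by computing the four remaining degrees. Every edge of $\Gamma^g_{H,G}$ has an endpoint in $H = \{1,h\}$, so each edge meets $1$ or $h$; but $\{1,h\}$ is itself an edge (as $h \in H$ and $[1,h] = 1 \neq g$) and $\deg(h) = 1$, so $h$ has no neighbour other than $1$. Consequently every edge is incident to $1$, and since $\deg(1) = |G| - 1$ the graph is precisely the star $K_{1,5}$. The only care needed here is confirming that $g$ is genuinely a $3$-cycle and that the two conjugacy conditions hold, so that Theorem \ref{deg_prop_1}(b) delivers $\deg(h) = 1$ rather than a larger value.
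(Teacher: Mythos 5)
Your proof is correct, but it is organized differently from the paper's, and the comparison is instructive. The paper's necessity argument starts from scratch: it takes any $1 \ne y \in H$, applies Theorem~\ref{deg_prop_1} to $\deg(y)=1$ to force $|G|=6$ (hence $G \cong S_3$), then enumerates the admissible $g$ and $H$ inside $S_3$ and eliminates $|H|=3$ by the triangle Lemma~\ref{deg_lemma_02}; you instead invoke the corollary of Theorem~\ref{not-tree} (a star is a tree) to pin $|H|=2$ at the outset, which makes the subgroup enumeration and the triangle lemma unnecessary, and only then run the same degree computation to extract $|G|=6$. Your route buys economy and makes explicit the divisibility step ($|C_G(h)| \mid |G|$ turning $|G|-|C_G(h)|=2$ into $|G| \in \{3,4\}$ and $|G|-2|C_G(h)|=2$ into $|G| \in \{4,6\}$) that the paper passes over with ``which gives $|G|=4$'' and ``which gives $|G|=6$''; the paper's route is self-contained within the theorem and does not lean on the tree result. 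For sufficiency the paper merely asserts ``it is easy to see,'' whereas you supply the full verification: $g \in K(H,G) \subseteq A_3$ forces $g$ to be a $3$-cycle, $hg$ and $hg^{-1}$ are transpositions conjugate to $h$, so Theorem~\ref{deg_prop_1}(b) gives $\deg(h)=6-2\cdot 2-1=1$, and since every edge meets $H=\{1,h\}$ the graph is exactly $K_{1,5}$ — a genuinely complete argument where the paper has none. One small point to make airtight: when you say $\deg(h)=1$ yields one of the two equations ``regardless of the order of $g$,'' Theorem~\ref{deg_prop_1} is conditional on $h$ being conjugate to $hg$ or $hg^{-1}$; this holds here either because $g \in K(H,G)\setminus\{1\}$ with $H=\{1,h\}$ forces $g=[h,y]$ for some $y$, or simply because otherwise $\deg(h)=|G|-1>1$. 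The paper glosses the same point, so this is a remark, not a gap.
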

\begin{proof}
By Lemma \ref{deg_lemma_2} we have  $\deg(1) = |G| - 1$. Suppose that  $\Gamma^g_{H,G}$ is a star graph. Then $\deg(x) = 1$ for all $1\ne x \in G$. Since $g \ne 1$ and $g \in K(H,G)$  we have $H \ne \{1\}$. 
Suppose that $1 \ne y \in H$.
%
If $g^2 =1$, then by Theorem \ref{deg_prop_1}, we have $1=\deg(y)=|G|-|C_G(y)|-1$ which gives $|G| = 4$, a contradiction since $G$ is non-abelian. If $g^2 \ne 1$, then by Theorem \ref{deg_prop_1}, we have $1=\deg(y)=|G|-|C_G(y)|-1$ or $|G|-2|C_G(y)|-1$ which gives   $|G| = 6$. Therefore, $G \cong S_3$, $g = (1 2 3), (1 3 2)$ and $H = \{(1), (1 2)\}, \{(1), (1 3)\}, \{(1), (2 3)\}$ or  $H = \{(1), (1 2 3), (1 3 2)\}$. If $|H| = 3$ then,  by Lemma \ref{deg_lemma_02}, $\Gamma^g_{H,S_3}$ is not a  star.
If $|H| = 2$ then it is easy to see that $\Gamma^g_{H,S_3}$ is a star. This completes the proof.
%
\end{proof}
\begin{theorem}
If $g \ne 1$ then $\Gamma^g_{H,G}$ is  not complete.
\end{theorem}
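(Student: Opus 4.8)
The plan is to rule out completeness by exhibiting a single pair of distinct vertices that fail to be adjacent. The leverage comes from the standing hypothesis $g \in K(H,G)$, which is exactly what guarantees such a pair exists. By the definition of $K(H,G)$ there are elements $a \in H$ and $b \in G$ with $[a,b] = g$. First I would note that $a \ne b$: if we had $a = b$ then $[a,b] = [a,a] = 1$, contradicting $g \ne 1$. Hence $a$ and $b$ are two distinct vertices of $\Gamma^g_{H,G}$. Since $a \in H$, the pair $(a,b)$ satisfies the membership requirement in the adjacency rule, but $[a,b] = g$ violates the condition $[x,y] \ne g, g^{-1}$; therefore $a$ and $b$ are non-adjacent. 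A graph in which some pair of distinct vertices is non-adjacent cannot be complete, so $\Gamma^g_{H,G}$ is not complete.

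An equivalent route, which I would mention for consistency with the rest of the section, goes through the degree formula already in hand. From $[a,b] = g$ we get $b^{-1}ab = ag$, so the vertex $a \in H$ is conjugate to $ag$; thus Theorem \ref{deg_prop_1} applies and gives $\deg(a) \le |G| - |C_G(a)| - 1 \le |G| - 2$, strictly below the value $|G|-1$ that every vertex must attain in a complete graph on $|G|$ vertices. Either formulation closes the argument in a couple of lines. I expect no genuine obstacle; the only point deserving care is that one must invoke the running assumption $g \in K(H,G)$ to produce the witnessing pair, since Observation \ref{deg_obs_2}(c) characterizes completeness only in the complementary regime $g \notin K(H,G)$ and so does not cover the present case.
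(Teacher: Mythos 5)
Your proposal is correct, and your primary argument is genuinely different from the paper's. The paper argues by contradiction through the degree machinery: assuming completeness forces $\deg(y)=|G|-1$ for every vertex, and applying Theorem~\ref{deg_prop_1} to some $1\ne y\in H$ yields $|G|-1 = |G|-|C_G(y)|-1$ or $|G|-2|C_G(y)|-1$, hence $|C_G(y)|=0$, which is absurd. Your first route instead exhibits a non-adjacent pair directly from the standing hypothesis $g\in K(H,G)$: choosing $a\in H$, $b\in G$ with $[a,b]=g$, checking $a\ne b$ (since $[a,a]=1\ne g$), and observing that $[a,b]=g$ violates the adjacency condition. This is shorter, needs no counting, and --- importantly --- is airtight on a point the paper glosses over: the degree formulas of Theorem~\ref{deg_prop_1} are stated only under the hypothesis that $y$ is conjugate to $yg$ or $yg^{-1}$, and for an \emph{arbitrary} $1\ne y\in H$ this need not hold (in which case $\deg(y)=|G|-1$ and no contradiction arises from that vertex). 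Your second formulation repairs exactly this: the witness $a$ satisfies $b^{-1}ab=ag$, so it \emph{is} conjugate to $ag$, the theorem legitimately applies, and $\deg(a)\le |G|-|C_G(a)|-1\le |G|-2<|G|-1$. So your degree-based variant is essentially the paper's argument done carefully at the correct vertex, while your direct variant buys elementarity and self-containedness; what the paper's version buys is uniformity with the adjacent results on trees and stars, which all run through the same degree formulas.
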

\begin{proof}
Suppose that $\Gamma^g_{H,G}$ is complete. Then $\deg(x) = |G| - 1$ for all $x \in G$. Since $g \ne 1$ and $g \in K(H,G)$  we have $H \ne \{1\}$. 
Suppose that $1 \ne y \in H$.
Then by Theorem \ref{deg_prop_1}, we have $|G| - 1 = \deg(y) = |G|-|C_G(y)|-1$ or $|G| - 1 = \deg(y) = |G| - 2|C_G(y)|-1$. Therefore, $|C_G(y)| = 0$, a contradiction. Hence, $\Gamma^g_{H,G}$ is not complete.
\end{proof}

\begin{theorem}
Let $H$ be a normal subgroup of $G$. If $g$ and $h$ are conjugate elements of  $G$ then $\Gamma^g_{H,G} \cong \Gamma^h_{H,G}$.
\end{theorem}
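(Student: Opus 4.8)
The plan is to exhibit an explicit graph isomorphism realized by inner conjugation. Since $g$ and $h$ are conjugate in $G$, write $h = a^{-1}ga$ for some $a \in G$, and define the map $\phi \colon G \to G$ by $\phi(x) = a^{-1}xa$. As $\phi$ is an inner automorphism of $G$, it is a bijection of the common vertex set $G$ of both graphs, and it preserves distinctness of vertices. It therefore remains only to verify that $\phi$ carries the adjacency relation of $\Gamma^g_{H,G}$ exactly onto that of $\Gamma^h_{H,G}$.

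The core computation I would carry out first is the behaviour of the commutator under $\phi$. A direct expansion gives
\[
[\phi(x),\phi(y)] = [a^{-1}xa,\, a^{-1}ya] = a^{-1}x^{-1}y^{-1}xy\,a = a^{-1}[x,y]a.
\]
From this identity the commutator side of the adjacency condition transfers cleanly: since $h = a^{-1}ga$ and $h^{-1} = a^{-1}g^{-1}a$, we have $a^{-1}[x,y]a = h$ if and only if $[x,y] = g$, and likewise $a^{-1}[x,y]a = h^{-1}$ if and only if $[x,y] = g^{-1}$. Consequently $[\phi(x),\phi(y)] \neq h, h^{-1}$ precisely when $[x,y] \neq g, g^{-1}$.

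Next I would handle the membership side of the adjacency condition, namely the clause ``$x \in H$ or $y \in H$''. This is where the hypothesis that $H$ is \emph{normal} in $G$ is essential, and I expect it to be the only genuinely delicate point in the argument: because $H \trianglelefteq G$, conjugation by $a$ preserves $H$, so $\phi(x) = a^{-1}xa \in H$ if and only if $x \in H$. Hence $x \in H$ or $y \in H$ holds if and only if $\phi(x) \in H$ or $\phi(y) \in H$. Combining this with the commutator equivalence from the previous step shows that $x$ and $y$ are adjacent in $\Gamma^g_{H,G}$ if and only if $\phi(x)$ and $\phi(y)$ are adjacent in $\Gamma^h_{H,G}$.

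Putting the pieces together, $\phi$ is a bijection on $G$ preserving and reflecting adjacency, hence a graph isomorphism $\Gamma^g_{H,G} \cong \Gamma^h_{H,G}$, which completes the proof. I would emphasize that normality of $H$ is used solely (but crucially) to guarantee that $\phi$ respects the subgroup $H$; the commutator identity $[\phi(x),\phi(y)] = a^{-1}[x,y]a$ holds for any $a$ and requires no special assumption.
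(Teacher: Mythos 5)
Your proof is correct and follows essentially the same route as the paper: conjugation by the element realizing $h = a^{-1}ga$, the identity $[\phi(x),\phi(y)] = a^{-1}[x,y]a$, and normality of $H$ to ensure $\phi$ preserves the membership clause. If anything, your version is slightly more careful than the paper's, since you verify both directions of the adjacency condition explicitly (including non-adjacency arising from both vertices lying outside $H$), whereas the paper's non-adjacency case tacitly treats only the commutator clause.
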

\begin{proof}
Let $h = g^x:=x^{-1}gx$ for some $x \in G$. Then for any two elements $a_1, a_2 \in G$, it is easy to see that 
\begin{equation}\label{eq-conj}
[a_1^x, a_2^x] = h \text{ or } h^{-1} \text{ if and only if } [a_1, a_2] = g \text{ or } g^{-1}.
\end{equation}
Consider the bijection $\phi: V(\Gamma^g_{H,G}) \to V(\Gamma^h_{H,G})$ given by $\phi(a) = a^x$ for all $x \in G$. We shall show that $\phi$ preserves adjacency.

Suppose that $a_1$ and $a_2$ are two elements of $V(\Gamma^h_{H,G})$. If $a_1$ and $a_2$ are not adjacent in $\Gamma^g_{H,G}$ then $[a_1, a_2] = g$ or $g^{-1}$. Therefore, by \eqref{eq-conj}, it follows that $\phi(a_1)$ and $\phi(a_2)$ are not adjacent in $\Gamma^h_{H,G}$. If $a_1$ and $a_2$ are adjacent then atleast one of $a_1$ and $a_2$ must belong to $H$ and  $[a_1, a_2] \ne g, g^{-1}$. Without any loss assume that $a_1 \in H$. Since $H$ is a normal subgroup of $G$ we have $\phi(a_1) \in H$. By \eqref{eq-conj}, we have $[\phi(a_1), \phi(a_2)] \ne h, h^{-1}$. Thus $\phi(a_1)$ and $\phi(a_2)$ are  adjacent in $\Gamma^h_{H,G}$. Hence, the result follows.
\end{proof}


A pair of isomorphisms $(\phi, \psi)$ is called a relative isoclinism between the pairs of groups $(H_1, G_1)$ and $(H_2, G_2)$, where $H_i \leq G_i$ for $i = 1, 2$, $\phi : \frac{G_1}{Z(H_1, G_1)} \to \frac{G_2}{Z(H_2, G_2)}$ and $\psi : [H_1, G_1] \to [H_2, G_2]$, if 
\[
\phi\left(\frac{H_1}{Z(H_1, G_1)}\right) = \frac{H_2}{Z(H_2, G_2)} \text{ and } \psi \circ a_{(H_1, G_1)} = a_{(H_2, G_2)} \circ (\phi \times \phi),
\]
where $a_{(H_i, G_i)} : \frac{H_i}{Z(H_i, G_i)} \times \frac{G_i}{Z(H_i, G_i)} \to [H_i, G_i]$ is given by 
\[
a_{(H_i, G_i)}((h_iZ(H_i, G_i), g_iZ(H_i, G_i))) = [h_i, g_i]
\]  
and $\phi \times \phi : \frac{H_1}{Z(H_1, G_1)} \times \frac{G_1}{Z(H_1, G_1)} \to  \frac{H_2}{Z(H_2, G_2)} \times \frac{G_2}{Z(H_2, G_2)}$ is given by 
\[
(\phi \times \phi)((h_1Z(H_1, G_1), g_1Z(H_1, G_1))) = (\phi(h_1Z(H_1, G_1)), \phi(g_1Z(H_1, G_1))).
\]
Thus for all $h_1 \in H_1$ and $g_1 \in G_1$ we must have $\psi ([h_1,g_1]) = [h_2,g_2]$, where $g_2 \in \phi(g_1Z(H_1,G_1))$ and $h_2 \in \phi(h_1Z(H_1,G_1))$.
 
The pairs $(H_1, G_1)$ and  $(H_2, G_2)$ are called relative isoclinic if there is a relative isoclinism between them. The concept of relative isoclinism between two pairs of groups was introduced in \cite{nath_yadav,SSK07,TE13}. This coincides with one of the fascinating concepts of Hall \cite{Hall} known as  isoclinism between two groups if $H_i = G_i$ for $i = 1, 2$. In \cite[Theorem 4.5]{TE13}, it was shown that $\Gamma^1_{H_1,G_1}$ is isomorphic to $\Gamma^1_{H_2,G_2}$ if $(H_1, G_1)$ and  $(H_2, G_2)$ are relative isoclinic satisfying certain conditions. Tolue et al. \cite[Theorem 2.16]{TEJ14},  also proved that $\Gamma_{G_1}^g$ is isomorphic to $\Gamma_{G_2}^{\psi(g)}$ if $G_1$ and $G_2$ are isoclinic such that $|Z(G_1)| = |Z(G_2)|$. We conclude this section with the following result which generalizes \cite[Theorem 2.16]{TEJ14}.
\begin{theorem}
Let $(\phi, \psi)$ be  a relative isoclinism between the pairs of groups $(H_1, G_1)$ and  $(H_2, G_2)$. If $|Z(H_1, G_1)| = |Z(H_2, G_2)|$ then $\Gamma_{H_1, G_1}^g$ is isomorphic to $\Gamma_{H_2, G_2}^{\psi(g)}$.
\end{theorem}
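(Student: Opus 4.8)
The plan is to mimic the strategy behind \cite[Theorem 2.16]{TEJ14} and \cite[Theorem 4.5]{TE13}: build an explicit graph isomorphism $f : G_1 \to G_2$ between the vertex sets $V(\Gamma^g_{H_1, G_1}) = G_1$ and $V(\Gamma^{\psi(g)}_{H_2, G_2}) = G_2$ out of the quotient isomorphism $\phi$, and then verify that $f$ respects both the membership-in-$H$ condition and the commutator condition that together govern adjacency in these graphs.

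First I would record that the hypotheses force $|G_1| = |G_2|$: since $\phi$ is an isomorphism we have $\left|\frac{G_1}{Z(H_1,G_1)}\right| = \left|\frac{G_2}{Z(H_2,G_2)}\right|$, and multiplying by the assumed equality $|Z(H_1,G_1)| = |Z(H_2,G_2)|$ gives the claim. More to the point, every coset of $Z(H_1,G_1)$ in $G_1$ has exactly the same cardinality as every coset of $Z(H_2,G_2)$ in $G_2$. Hence for each coset $aZ(H_1,G_1)$ I can fix an arbitrary bijection onto the coset $\phi(aZ(H_1,G_1))$, and taking the union over all cosets produces a bijection $f : G_1 \to G_2$ with the defining property $f(a)Z(H_2,G_2) = \phi(aZ(H_1,G_1))$, equivalently $f(a) \in \phi(aZ(H_1,G_1))$, for every $a \in G_1$.

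Next I would check the two ingredients of adjacency. For the subgroup condition, the relation $\phi\left(\frac{H_1}{Z(H_1,G_1)}\right) = \frac{H_2}{Z(H_2,G_2)}$ shows that $a \in H_1$ iff $aZ(H_1,G_1)$ is one of the cosets comprising $H_1$ iff $\phi(aZ(H_1,G_1))$ lies in $\frac{H_2}{Z(H_2,G_2)}$ iff $f(a) \in H_2$; thus $f(H_1) = H_2$, and ``at least one of $a_1, a_2$ lies in $H_1$'' transfers exactly to ``at least one of $f(a_1), f(a_2)$ lies in $H_2$.'' For the commutator condition, assume without loss that $a_1 \in H_1$. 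Then $[a_1, a_2] \in K(H_1, G_1) \subseteq [H_1, G_1]$, so $\psi([a_1,a_2])$ is defined, and applying the defining identity $\psi \circ a_{(H_1,G_1)} = a_{(H_2,G_2)} \circ (\phi \times \phi)$ to the pair $(a_1 Z(H_1,G_1), a_2 Z(H_1,G_1))$ yields $\psi([a_1,a_2]) = [f(a_1), f(a_2)]$, because $f(a_1) \in \phi(a_1 Z(H_1,G_1))$ and $f(a_2) \in \phi(a_2 Z(H_1,G_1))$. Since $\psi$ is a bijective homomorphism we have $\psi(g^{-1}) = \psi(g)^{-1}$, so $[a_1,a_2] \in \{g, g^{-1}\}$ if and only if $[f(a_1),f(a_2)] \in \{\psi(g), \psi(g)^{-1}\}$. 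Combining the two conditions shows that $a_1, a_2$ are adjacent in $\Gamma^g_{H_1,G_1}$ precisely when $f(a_1), f(a_2)$ are adjacent in $\Gamma^{\psi(g)}_{H_2,G_2}$, while if neither $a_1$ nor $a_2$ lies in $H_1$ both pairs are non-adjacent; hence $f$ is the desired isomorphism.

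The step that needs the most care, and the one I would single out as the main obstacle, is the legitimacy of the equation $\psi([a_1,a_2]) = [f(a_1), f(a_2)]$, since it secretly relies on $[h_2, g_2]$ depending only on the cosets $h_2 Z(H_2,G_2)$ and $g_2 Z(H_2,G_2)$; this is exactly the well-definedness of $a_{(H_2,G_2)}$. I would verify it directly: for $z, z' \in Z(H_2,G_2)$ each such element commutes with all of $G_2$, in particular with $h_2$, $g_2$ and $[h_2,g_2]$, so a short manipulation gives $[h_2 z, g_2 z'] = h_2^{-1} g_2^{-1} h_2 g_2 (z^{-1}z'^{-1}zz') = [h_2, g_2]$. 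This guarantees that the right-hand side $[f(a_1),f(a_2)]$ is unambiguous, and the rest of the argument is a routine unwinding of the definitions of $f$, $\phi$ and $\psi$.
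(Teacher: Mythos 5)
Your proposal is correct and takes essentially the same approach as the paper: both arguments lift the quotient isomorphism $\phi$ to a bijection $G_1 \to G_2$ using $|Z(H_1,G_1)| = |Z(H_2,G_2)|$ (the paper via fixed transversals together with a bijection $\theta \colon Z(H_1,G_1) \to Z(H_2,G_2)$, you via arbitrary bijections coset by coset, which is the same construction), and then invoke the compatibility identity $\psi \circ a_{(H_1,G_1)} = a_{(H_2,G_2)} \circ (\phi \times \phi)$ to transfer the commutator condition, with $\phi\left(\frac{H_1}{Z(H_1,G_1)}\right) = \frac{H_2}{Z(H_2,G_2)}$ handling membership in $H$. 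Your explicit check that $a_{(H_2,G_2)}$ is well defined, and your treatment of both directions of adjacency (including pairs with neither vertex in $H_1$), are points the paper leaves implicit, but they do not constitute a different argument.
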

\begin{proof}
Since $\phi : \frac{G_1}{Z(H_1,G_1)} \to \frac{G_2}{Z(H_2,G_2)}$ is an isomorphism such that $\phi\left(\frac{H_1}{Z(H_1,G_1)}\right)$ $= \frac{H_2}{Z(H_2,G_2)}$. So we have $|\frac{H_1}{Z(H_1, G_1)}|  = |\frac{H_2}{Z(H_2, G_2)}|$ and $|\frac{G_1}{Z(H_1, G_1)}|  = |\frac{G_2}{Z(H_2, G_2)}|$. Let $\left|\frac{H_1}{Z(H_1,G_1)}\right| = |\frac{H_2}{Z(H_2, G_2)}| = m$ and $\left|\frac{G_1}{Z(H_1,G_1)}\right|= |\frac{G_2}{Z(H_2, G_2)}| =  n$. Given $|Z(H_1,G_1)|=|Z(H_2,G_2)|$, so $\exists$ a bijection $\theta:Z(H_1,G_1)\to Z(H_2,G_2)$. 
Let $\{h_1, h_2, \, \dots,\, h_m,$ $g_{m + 1}, \dots, g_n\}$  and  $\{h'_1, h'_2, \dots, h'_m,\, g'_{m + 1}, \, \dots, \, g'_n\}$ be two transversals of $\frac{G_1}{Z(H_1,G_1)}$ and $\frac{G_2}{Z(H_2,G_2)}$ respectively where  $\{h_1, h_2, \dots, h_m\}$ and $\{h'_1, h'_2,\dots, h'_m\}$ are transversals of $\frac{H_1}{Z(H_1, G_1)}$ and $\frac{H_2}{Z(H_2, G_2)}$ respectively. Let us define $\phi$ as $\phi(h_iZ(H_1,G_1)) = h'_iZ(H_2,G_2)$ and $\phi(g_jZ(H_1,G_1)) = g'_jZ(H_2,G_2)$ for $1 \leq i \leq m$ and $m + 1 \leq j \leq n$. 
	
	Let $\mu :G_1 \to G_2$ be a map such that $\mu(h_iz)=h'_i\theta(z)$, $\mu(g_jz)=g'_j\theta(z)$ for $z \in Z(H_1,G_1)$, $1 \leq i \leq m$ and $m + 1 \leq j \leq n$. Clearly $\mu$ is a bijection. Suppose two vertices $x$ and $y$ in $\Gamma_{H_1,G_1}^g$ are adjacent. Then $x \in H_1$ or $y \in H_1$ and  $[x, y] \neq g, g^{-1}$. Without any loss of generality, let us assume that $x \in H_1$. Then $x = h_iz_1$ for $1\leq i \leq m$ and $y = kz_2$ where  $z_1, z_2 \in Z(H_1, G_1)$, $k \in \{h_1, h_2, \dots, h_m, g_{m + 1}, \dots, g_n\}$. Therefore, for some $k' \in \{h'_1, \dots, h'_m, g'_{m + 1}, \dots, g'_n\}$, we have
\begin{align}\label{Iso-eq-001}
\psi([h_iz_1, kz_2])&= \psi([h_i, k]) 
= \psi \circ a_{(H_1, G_1)}\left((h_iZ(H_1, G_1), kZ(H_1, G_1))\right) \nonumber \\
& = a_{(H_2, G_2)} \circ (\phi \times \phi)\left((h_iZ(H_1, G_1), kZ(H_1, G_1))\right)\nonumber \\
& = a_{(H_2, G_2)}((h'_iZ(H_2, G_2), k'Z(H_2, G_2))) \nonumber\\
&= [h'_i, k'] 
= [h'_iz'_1, k'z'_2], 
\end{align}
where  $z'_1, z'_2 \in Z(H_2, G_2)$. 
Also,	
\begin{align*}
	&[h_iz_1, kz_2] \ne g, g^{-1}\\
	\Rightarrow \,& \psi([h_iz_1, kz_2]) \ne \psi(g), \psi(g^{-1})\\
	\Rightarrow \,& [h'_iz'_1, k'z'_2)] \ne \psi(g), \psi^{-1}(g) \text{ (using \eqref{Iso-eq-001})} \\
	\Rightarrow \,& [h'_i\theta(z_1), k'\theta(z_2)] \ne \psi(g), \psi^{-1}(g)\\ 
	\Rightarrow \,& [\mu(h_iz_1),\mu(kz_2)]  \ne \psi(g), \psi^{-1}(g)\\
	\Rightarrow \,& [\mu(x),\mu(y)]  \ne \psi(g), \psi^{-1}(g).
	\end{align*}
	Thus $\mu(x)$ is adjacent to  $\mu(y)$  in $\Gamma_{H_2, G_2}^{\psi(g)}$ since  $\mu(x) \in H_2$. Hence, the graphs $\Gamma_{H_1, G_1}^g$ and $\Gamma_{H_2, G_2}^{\psi(g)}$ are isomorphic under the map $\mu$.
\end{proof}
\section{Relation between $\Gamma^g_{H, G}$ and  ${\Pr}_g(H, G)$}
The commuting  probability of a finite group $G$ is the probability that a  randomly chosen pair of elements of it commute with each other. The popularity of this probability have been constantly increasing since its inception which is attributed to the works of Erd$\ddot{\rm o}$s and Tur$\acute {\rm a}$n \cite{pEpT68} published  in the year 1968. Many mathematicians worked on commuting probability and its generalizations and obtained valuable results towards classification of finite groups. Results related to this notion can be found in \cite{dnp2013} and the references listed there. 
Two most striking generalizations of commuting probability due to Pournaki et. al \cite{pournaki_sobhani} and Erfanian et. al \cite{Erfanian_Rezaei} are given by 
\[
 {\Pr}_g(G) := \frac{|\{(x, y) \in G^2 : [x, y] = g\}|}{|G|^2} 
\] 
and 
\[
 {\Pr}_1(H, G) := \frac{|\{(x, y) \in H \times G : [x, y] = 1\}|}{|H||G|}
\]  
respectively. Blending these notions, Nath together with Das and Yadav   \cite{das-nath-10,nath_yadav} considered the following generalization of commuting probability in their study
\[
 {\Pr}_g(H, G) := \frac{|\{(x, y) \in H \times G : [x, y] = g\}|}{|H||G|}.
\]  
In \cite{TE13}, Tolue and Erfanian  established some relations between ${\Pr}_1(H, G)$ and relative non-commuting graphs of finite groups. In \cite{TEJ14}, Tolue et al.  also established relations between $\Gamma^g_{G}$ and  ${\Pr}_g(G)$.  Their results stimulate us  to obtain relations between $\Gamma^g_{H, G}$ and ${\Pr}_g(H, G)$. 
In this section, we obtain the number of edges of $\Gamma^g_{H, G}$, denoted by $|E(\Gamma_{H, G}^g)|$, in terms of ${\Pr}_g(H, G)$.
Clearly, if $g \notin K(H,G)$ then from Observation \ref{deg_obs_1}, we have
\[
 2|E(\Gamma_{H, G}^g)| = 2|H||G| - |H|^2 - |H|.
\]
The following theorem gives expressions for $|E(\Gamma_{H, G}^g)|$, in terms of ${\Pr}_g(H, G)$ if $g \in K(H,G)$.

\begin{theorem}\label{rel-1}
Let $H$ be a non-trivial subgroup of a finite group $G$ and $g \in K(H,G)$.
\begin{enumerate}
\item If $g=1$ then
$
2|E(\Gamma_{H, G}^g)| = 2|H||G|(1 - {\Pr}_g(H, G)) - |H|^2(1-{\Pr}_g(H)).
$
\item If $g \neq 1$ and $g^2 = 1$  then
\[
2|E(\Gamma_{H, G}^g)| = \begin{cases}
2|H||G|(1 - {\Pr}_g(H, G)) \\
 ~~~~~~~~~~- |H|^2(1-{\Pr}_g(H)) - |H|, & \text{ if } g \in H \\
2|H||G|(1 - {\Pr}_g(H, G)) - |H|^2 - |H|, & \text{ if } g \in G \setminus H.
\end{cases}
\]
\item If $g \neq 1$ and $g^2 \neq 1$  then
\[
2|E(\Gamma_{H, G}^g)| = \begin{cases}
2|H||G|(1 - \underset{u = g, g^{-1}}{\sum}{\Pr}_u(H, G))\\
 ~~~~~- |H|^2(1-\underset{u = g, g^{-1}}{\sum}{\Pr}_u(H)) - |H|, & \text{ if } g \in H \\
2|H||G|(1 - \underset{u = g, g^{-1}}{\sum}{\Pr}_u(H, G))\\
 ~~~~~- |H|^2 - |H|, & \text{ if } g \in G \setminus H.
\end{cases}
\]
\end{enumerate}
\end{theorem}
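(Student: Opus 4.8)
The plan is to compute $2|E(\Gamma_{H,G}^g)|$ through the degree-sum (handshaking) identity
\[
2|E(\Gamma_{H,G}^g)| = \sum_{x \in G}\deg(x) = \sum_{x \in H}\deg(x) + \sum_{x \in G \setminus H}\deg(x),
\]
and then to substitute the degree formulae of Theorem \ref{deg_prop_1} and Theorem \ref{deg_prop_2}. The first point I would stress is that the conjugacy case distinctions in those theorems (whether $x$ is conjugate to $xg$, to $xg^{-1}$, to both, or to neither) need not be carried into the summation: they are automatically absorbed once one sums the sizes of the preimage sets $S_u^{(x)} := \{y \in G : [x,y]=u\}$ and $T_u^{(x)} := \{y \in H : [x,y]=u\}$. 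Indeed, for $x \in H$ and $g \ne 1$ the proof of Theorem \ref{deg_prop_1} shows $\deg(x) = |G|-1-|S_g^{(x)}|-|S_{g^{-1}}^{(x)}|$, where each $|S_u^{(x)}|$ is either $0$ or $|C_G(x)|$ (and the two sets coincide when $g^2=1$); likewise $\deg(x)=|H|-|T_g^{(x)}|-|T_{g^{-1}}^{(x)}|$ for $x \in G\setminus H$. For $g=1$ the self-vertex $x$ already lies in $S_1^{(x)}=C_G(x)$, so $\deg(x)=|G|-|C_G(x)|$ with no separate correction.

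The second ingredient is the translation of the summed preimage counts into the generalized commuting probabilities. Directly from the definitions one has
\[
\sum_{x \in H}|S_u^{(x)}| = |H||G|\,{\Pr}_u(H,G) \qquad\text{and}\qquad \sum_{x \in H}|T_u^{(x)}| = |H|^2\,{\Pr}_u(H).
\]
For the contribution of $x \in G\setminus H$ the key manoeuvre is the commutator identity $[x,y]=u \Leftrightarrow [y,x]=u^{-1}$, which lets me rewrite $\sum_{x \in G\setminus H}|T_u^{(x)}| = |\{(x,y)\in (G\setminus H)\times H : [x,y]=u\}|$ as a count over $H\times(G\setminus H)$ of commutators equal to $u^{-1}$; subtracting the $H\times H$ part then gives $|H||G|\,{\Pr}_{u^{-1}}(H,G) - |H|^2\,{\Pr}_{u^{-1}}(H)$. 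Since $u\mapsto u^{-1}$ permutes the set $\{g,g^{-1}\}$, summing over $u\in\{g,g^{-1}\}$ restores the symmetric expressions appearing in the statement.

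With these in hand each case is a routine substitution. For $g=1$ no self-vertex correction appears and one lands on part (a). For $g\ne 1$ the self-vertex contributes an extra $-1$ for each of the $|H|$ vertices of $H$, producing the $-|H|$ term; collecting the probability contributions yields parts (b) and (c), the single summand $u=g$ occurring when $g^2=1$ and the pair $u\in\{g,g^{-1}\}$ when $g^2\ne 1$. Finally, to separate the subcases $g\in H$ and $g\in G\setminus H$ I would invoke the fact that $[x,y]\in H$ for all $x,y\in H$, whence ${\Pr}_g(H)={\Pr}_{g^{-1}}(H)=0$ whenever $g\notin H$; this collapses the term $|H|^2\bigl(1-\sum_u{\Pr}_u(H)\bigr)$ to $|H|^2$ and gives the second line in each of (b) and (c).

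I expect the main obstacle to be purely bookkeeping rather than conceptual: correctly tracking the inversion $u\leftrightarrow u^{-1}$ when passing from the $x\in G\setminus H$ sum back to the probabilities, and keeping the $\pm1$ and $-|H|$ corrections (arising from the loopless convention and the vertex $x$ itself) consistent across the three regimes $g=1$, $g^2=1$, and $g^2\ne 1$. No individual step is deep, but the sign and index conventions must be handled carefully to land on the stated closed forms.
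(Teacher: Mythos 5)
Your proposal is correct, and it reaches the theorem by a genuinely different route from the paper. The paper's proof never touches the degree formulae: it works directly with the ordered-pair sets $E_1 = \{(x,y) \in H \times G : x \ne y,\ [x,y] \ne g, g^{-1}\}$ and $E_2 \subseteq G \times H$, uses the bijection $(x,y) \mapsto (y,x)$ to get $|E_1| = |E_2|$, and then applies inclusion--exclusion in the form $2|E(\Gamma_{H,G}^g)| = 2|E_1| - |E_1 \cap E_2|$, where $E_1 \cap E_2$ is the $H \times H$ part; each count is read off immediately from the definitions of ${\Pr}_u(H,G)$ and ${\Pr}_u(H)$, with the $-|H|$ term coming from deleting the diagonal $\{(x,x) : x \in H\}$. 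You instead sum degrees via the handshaking lemma and feed in Theorems \ref{deg_prop_1} and \ref{deg_prop_2}. The two computations are of course dual double counts of the same ordered pairs --- your inversion manoeuvre $[x,y]=u \Leftrightarrow [y,x]=u^{-1}$ for the $(G \setminus H) \times H$ contribution plays exactly the role of the paper's bijection $E_1 \to E_2$, and your observation that ${\Pr}_u(H) = 0$ for $u \notin H$ matches the paper's split $|E_1 \cap E_2| = |H|^2 - |H|$ when $g \in G \setminus H$. What your route buys is reuse of Section 2 and a vertex-local view of where each correction term originates; notably, your uniformized formulation $\deg(x) = |G| - 1 - |S_g^{(x)} \cup S_{g^{-1}}^{(x)}|$ with $|S_u^{(x)}| \in \{0, |C_G(x)|\}$ is actually \emph{needed} for the handshake sum, since the theorems as stated are silent for vertices (e.g.\ $x \in Z(H,G)$ with $g \ne 1$) conjugate to neither $xg$ nor $xg^{-1}$, and your absorption of the case distinctions quietly repairs that coverage gap. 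What the paper's route buys is self-containedness and immunity to precisely that bookkeeping: it never has to worry about which conjugacy case each vertex falls into, nor about the $g^2 = 1$ coincidence $S_g^{(x)} = S_{g^{-1}}^{(x)}$, which your write-up handles correctly in the end (via the single summand $u = g$) but states a little loosely mid-argument, where the displayed formula $|G| - 1 - |S_g^{(x)}| - |S_{g^{-1}}^{(x)}|$ would double-subtract if applied verbatim when $g = g^{-1}$.
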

\begin{proof}
Let $E_1 = \{(x, y) \in H \times G: x \ne y, [x,y] \neq g \text{ and } [x, y] \neq g^{-1}\}$ and $E_2 = \{(x, y) \in G \times H: x \ne y, [x,y] \neq g \text{ and } [x, y] \neq g^{-1}\}$. Clearly we have a bijection from $E_1$ to $E_2$ defined by $(x,y) \mapsto (y,x)$. So $|E_1| = |E_2|$. It is easy to see that $|E(\Gamma^g_{H, G})|$ is equal to half $|E_1 \cup E_2|$. Therefore, 
\begin{equation}\label{no_of_edges}
   2|E(\Gamma_{H, G}^g)| = 2|E_1| - |E_1 \cap E_2|, 
\end{equation} 
where $E_1 \cap E_2$ = $\{(x, y) \in H \times H: x \ne y, [x,y] \neq g \text{ and } [x, y] \neq g^{-1}\}$.

\noindent(a) If $g = 1$ then we have
\begin{align*}
|E_1| &= |\{(x, y) \in H \times G: [x,y] \neq 1 \}|\\
      &= |H||G| - |\{(x, y) \in H \times G: [x,y] = 1 \}|\\
      &= |H||G|(1-{\Pr}(H, G))  
\end{align*}
and
\begin{align*}
|E_1 \cap E_2| &= |\{(x, y) \in H \times H: [x,y] \neq 1 \}|\\
               &= |H|^2 - |\{(x, y) \in H \times H: [x,y] = 1 \}|\\
               &= |H|^2(1-{\Pr}(H)).    
\end{align*}
\noindent Hence, the result follows from \eqref{no_of_edges}.

\noindent(b) If $g \ne 1$  and  $g^2 = 1$ then  we have 
\begin{align*}
 |E_1| &= |\{(x, y) \in H \times G: x \ne y, [x,y] \neq g \}|\\
       &= |H||G| - |\{(x, y) \in H \times G: [x,y] = g \}| - |\{(x, y) \in H \times G: x = y \}|\\
       &= |H||G|(1 - {\Pr}_g(H, G)) - |H|.   
\end{align*}
Now, if $g \in H$ then
\begin{align*}
 |E_1 \cap &E_2| = |\{(x, y) \in H \times H: x \ne y, [x,y] \neq g \}|\\
                &= |H|^2 - |\{(x, y) \in H \times H: [x,y] = g \}| - |\{(x, y) \in H \times H: x = y \}|\\
                &= |H|^2(1-{\Pr}_g(H)) - |H|.   
\end{align*}
If $g \in G \setminus H$ then
\[
|E_1 \cap E_2| = |H|^2 - |H|.
\]
\noindent Hence, the result follows from \eqref{no_of_edges}.

\noindent (c) If g $\ne 1$  and  $g^2 \ne 1$ then we have
\begin{align*}
|E_1| &= |\{(x, y) \in H \times G: x \ne y, [x,y] \neq g \text{ and } [x,y] \neq g^{-1} \}|\\
      &= |H||G| - |\{(x, y) \in H \times G: [x,y] = g \}|\\
      &~~~~- |\{(x, y) \in H \times G: [x,y] = g^{-1} \}| - |\{(x, y) \in H \times H: x = y \}|\\
      &= |H||G|(1 - \underset{u = g, g^{-1}}{\sum}{\Pr}_u(H, G)) - |H|.
\end{align*} 
 Now, if $g \in H$ then
\begin{align*}
  |E_1 \cap E_2| &= |\{(x, y) \in H \times H: x \ne y, [x,y] \neq g \text{ and } [x,y] \neq g^{-1}  \}|\\
&= |H|^2 - |\{(x, y) \in H \times H: [x,y] = g \}| \\
&~~~~ -|\{(x, y) \in H \times H: [x,y] = g^{-1} \}|
    - |\{(x, y) \in H \times H: x = y \}|\\
&= |H|^2(1 - \underset{u = g, g^{-1}}{\sum}{\Pr}_u(H)) - |H|.  
\end{align*}
If $g \in G \setminus H$ then
\[
|E_1 \cap E_2| = |H|^2 - |H|.
\]
\noindent Hence, the result follows from \eqref{no_of_edges}.
\end{proof}
If $H$ is an abelian  group then we have 
\[
{\Pr}_g(H) = \begin{cases}
		1, &\mbox{if } g = 1 \\
		0, &\mbox{if } g \neq 1.
		\end{cases}
\]
Using these values in  Theorem \ref{rel-1} we get the following corollary.
\begin{corollary}\label{rel-cor-01}
Let $H$ be an abelian non-trivial subgroup of a finite group $G$ and $g \in K(H,G)$.
\begin{enumerate}
\item If $g=1$ then
$
|E(\Gamma_{H, G}^g)| = |H||G|(1 - {\Pr}_g(H, G)).
$
\item If $g \neq 1$ and $g^2 = 1$  then
\[
2|E(\Gamma_{H, G}^g)| = 2|H||G|(1 - {\Pr}_g(H, G)) - |H|^2 - |H|.
\]
\item If $g \neq 1$ and $g^2 \neq 1$  then
\[
2|E(\Gamma_{H, G}^g)| = 2|H||G|(1 - \underset{u = g, g^{-1}}{\sum}{\Pr}_u(H, G)) - |H|^2 - |H|.
\]
\end{enumerate}
\end{corollary}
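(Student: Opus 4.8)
The plan is to derive this corollary directly from Theorem \ref{rel-1} by substituting the closed-form values of ${\Pr}_g(H)$ that hold when $H$ is abelian, namely ${\Pr}_1(H) = 1$ and ${\Pr}_g(H) = 0$ for $g \neq 1$, as recorded in the display immediately preceding the statement. Since $H$ is abelian and non-trivial, no fresh combinatorial counting is required; the whole argument reduces to evaluating the three general formulae of Theorem \ref{rel-1} in the cases $g = 1$, $g \neq 1$ with $g^2 = 1$, and $g \neq 1$ with $g^2 \neq 1$.

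First, for part (a), I would take the formula of Theorem \ref{rel-1}(a) and put $g = 1$. The factor $1 - {\Pr}_g(H) = 1 - {\Pr}_1(H) = 0$ then annihilates the $|H|^2$-term, leaving $2|E(\Gamma_{H,G}^g)| = 2|H||G|(1 - {\Pr}_g(H,G))$; dividing by $2$ gives the stated identity.

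Next, for parts (b) and (c), the key observation is that Theorem \ref{rel-1} splits each of these cases according to whether $g \in H$ or $g \in G \setminus H$, yet these two subcases collapse to a single formula under the abelian hypothesis. Indeed, since $g \neq 1$ (and, in case (c), also $g^{-1} \neq 1$) we have ${\Pr}_g(H) = {\Pr}_{g^{-1}}(H) = 0$, so the term $|H|^2(1 - {\Pr}_g(H))$ in the $g \in H$ branch of (b), and likewise $|H|^2\bigl(1 - \sum_{u=g,g^{-1}}{\Pr}_u(H)\bigr)$ in the $g \in H$ branch of (c), each simplify to exactly $|H|^2$. But $|H|^2$ is precisely the term already present in the $g \in G \setminus H$ branch. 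Substituting therefore yields the common expression $2|E(\Gamma_{H,G}^g)| = 2|H||G|(1 - {\Pr}_g(H,G)) - |H|^2 - |H|$ in case (b), and the analogous expression with $\sum_{u=g,g^{-1}}{\Pr}_u(H,G)$ in case (c), regardless of the location of $g$.

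I do not anticipate any genuine obstacle here; the single point meriting a moment of care is to confirm that the two branches of Theorem \ref{rel-1}(b) and of \ref{rel-1}(c) really do produce identical right-hand sides once the abelian values are inserted, since it is exactly this coincidence that licenses stating the corollary without a case distinction on whether $g$ lies inside $H$.
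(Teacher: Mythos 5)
Your proposal is correct and matches the paper's own argument exactly: the paper likewise obtains the corollary by substituting ${\Pr}_1(H)=1$ and ${\Pr}_g(H)=0$ (for $g\neq 1$, $H$ abelian) into Theorem \ref{rel-1}. Your extra check that the $g\in H$ and $g\in G\setminus H$ branches collapse to a common formula is precisely the observation implicit in the paper's case-free statement.
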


\begin{proposition}\label{com-prop-1}
Let $H$ be a subgroup of a finite group $G$ and $p$ be the smallest prime dividing $|G|$. Let $|[H, G]| = p$ and $g \in K(H, G)$. 
\begin{enumerate}
\item If $g = 1$ then
\[
2p|E(\Gamma_{H, G}^g)| = (p-1)[2|G|(|H| - |Z(H,G)|) - |H|(|H| - |Z(H)|)].
\]
\item If $g \neq 1$ and $g^2 = 1$ then
\[
2p|E(\Gamma_{H, G}^g)| = \begin{cases}
2|G|((p-1)|H| + |Z(H,G)|) &\\
~~~~- |H|((p-1)|H| + |Z(H)| + p), & \text{ if } g \in H \\
2|G|((p-1)|H| + |Z(H,G)|) &\\
~~~~- p|H|(|H| + 1), & \text{ if } g \in G \setminus H.
\end{cases}
\]
\item If $g \neq 1$ and $g^2 \neq 1$ then
\[
2p|E(\Gamma_{H, G}^g)| = \begin{cases}
2|G|((p-2)|H| + 2|Z(H,G)|) &\\
~~~- |H|((p-2)|H| + 2|Z(H)| + p), & \text{ if } g \in H \\
2|G|((p-2)|H| + 2|Z(H,G)|) &\\
~~~- p|H|(|H| + 1), & \text{ if } g \in G \setminus H.
\end{cases}
\]
\end{enumerate}
\end{proposition}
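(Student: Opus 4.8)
The plan is to derive closed-form expressions for the generalized commuting probabilities appearing in Theorem \ref{rel-1} under the hypothesis $|[H,G]| = p$, and then to substitute. The structural engine is the following observation. Using the commutator identity $[h,g]^{g'} = [h,g']^{-1}[h,gg']$, one checks that $[H,G]$ is a normal subgroup of $G$. Since $|[H,G]| = p$ is the smallest prime dividing $|G|$, the standard fact that a normal subgroup of order equal to the smallest prime divisor of $|G|$ lies in $Z(G)$ gives $[H,G] \le Z(G)$. Consequently every commutator $[x,y]$ with $x \in H$, $y \in G$ is central in $G$, and $[H,G]$ is cyclic of order $p$, say $[H,G] \cong \Z_p$.

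First I would record the consequences of centrality. For each fixed $x \in H$ the map $f_x \colon G \to [H,G]$, $y \mapsto [x,y]$, is a homomorphism (because the commutators are central), with kernel $C_G(x)$; likewise $x \mapsto [x,y]$ is a homomorphism on $H$. Since $[H,G] \cong \Z_p$ is simple, the image of $f_x$ is either trivial or all of $[H,G]$, whence $K(H,G) = [H,G]$ and every $g \in K(H,G)$ has order $1$ or $p$. This identifies the three cases of the proposition: (a) is $g = 1$; for $g \ne 1$ the order of $g$ is $p$, so $g^2 = 1$ forces $p = 2$ (case (b)) while $g^2 \ne 1$ forces $p$ odd (case (c)). Moreover $f_x$ is trivial exactly when $x \in Z(H,G)$ and is surjective otherwise, so $|C_G(x)| = |G|/p$ for $x \in H \setminus Z(H,G)$ and $|C_G(x)| = |G|$ for $x \in Z(H,G)$.

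Next I would count fibres of the commutator map to evaluate the probabilities. Summing $|\{y \in G : [x,y] = g\}|$ over $x \in H$ and using the two centralizer sizes above yields
\[
{\Pr}_1(H,G) = \frac{(p-1)|Z(H,G)| + |H|}{p|H|}, \qquad {\Pr}_g(H,G) = \frac{|H| - |Z(H,G)|}{p|H|} \ \ (g \ne 1),
\]
so that $1 - {\Pr}_1(H,G) = \tfrac{(p-1)(|H|-|Z(H,G)|)}{p|H|}$. The same argument applied to $H$ in place of $G$, using $H' = [H,H] \le [H,G] \le Z(H)$ (so $|H'| \in \{1,p\}$ and commutators in $H$ are again central), gives the analogous formulas for ${\Pr}_g(H)$ with $Z(H)$ and $|H|$ replacing $Z(H,G)$ and $|G|$; these remain valid when $H$ is abelian, where $Z(H) = H$ makes them degenerate correctly to $1$ and $0$.

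Finally I would substitute these values into Theorem \ref{rel-1}, treating the subcases $g \in H$ and $g \in G \setminus H$ separately (in the latter, ${\Pr}_g(H)$ and $\sum_u {\Pr}_u(H)$ do not occur). Clearing the denominator $p|H|$ and collecting terms then produces the three displayed formulae; for case (c) one uses $\sum_{u = g,g^{-1}} {\Pr}_u(H,G) = \tfrac{2(|H|-|Z(H,G)|)}{p|H|}$ since $g$ and $g^{-1}$ are both nontrivial. The only genuinely delicate point is the structural reduction at the start, namely establishing $[H,G] \trianglelefteq G$ and its centrality, from which $K(H,G) = [H,G]$ and the uniform centralizer index $p$ follow; once that is in place, the probability computations and the final substitution are routine bookkeeping.
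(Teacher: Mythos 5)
Your proposal is correct, and at the top level it follows the same route as the paper: obtain closed forms for the relevant probabilities under $|[H,G]|=p$ and substitute into Theorem~\ref{rel-1}. The difference is that the paper does this in two lines by simply quoting \cite[Lemma 3]{nath_yadav} for ${\Pr}_g(H,G)$, whereas you re-prove that lemma from scratch: normality of $[H,G]$ via $[h,g]^{g'}=[h,g']^{-1}[h,gg']$, centrality of an order-$p$ normal subgroup when $p$ is the smallest prime divisor of $|G|$, the homomorphisms $f_x\colon y\mapsto [x,y]$ with kernel $C_G(x)$ and image trivial or all of $[H,G]\cong \Z_p$, and fibre counting. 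Your values ${\Pr}_1(H,G)=\frac{|H|+(p-1)|Z(H,G)|}{p|H|}$ and ${\Pr}_g(H,G)=\frac{|H|-|Z(H,G)|}{p|H|}$ for $g\neq 1$ agree exactly with the cited lemma, and substituting them (and the $H$-analogues) into Theorem~\ref{rel-1}(a)--(c) and clearing $p$ does reproduce all six displayed formulas; for instance in part (a) one gets $2|E(\Gamma_{H,G}^1)|=\frac{p-1}{p}\bigl[2|G|(|H|-|Z(H,G)|)-|H|(|H|-|Z(H)|)\bigr]$. Your self-contained derivation buys two genuine things. First, it justifies the values of ${\Pr}_g(H)$ and $\sum_{u=g,g^{-1}}{\Pr}_u(H)$ needed in the $g\in H$ cases, which the paper's citation does not literally supply: Lemma 3 of \cite{nath_yadav} concerns a pair with $|[H,G]|=p$, so applying it to the pair $(H,H)$ needs your observation that $H'\leq [H,G]$ forces $|H'|\in\{1,p\}$, with the abelian case degenerating correctly since $Z(H)=H$; one clause worth adding to your write-up is that when $H'\neq 1$ and $1\neq g\in H$, we have $\langle g\rangle=[H,G]=H'$, so $g$ actually lies in $K(H,H)$ and the fibre count for ${\Pr}_g(H)$ applies. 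Second, since your argument uses only the centrality $[H,G]\leq Z(G)$ rather than minimality of $p$ per se, it makes transparent the paper's follow-up remark that the same conclusions hold for nilpotent $G$ with $p$ an arbitrary prime (\cite[Theorem B]{nath_yadav}), because an order-$p$ normal subgroup of a nilpotent group is again central. Your side observation that a nontrivial $g\in K(H,G)=[H,G]$ has order $p$, so case (b) occurs only for $p=2$ and case (c) only for odd $p$, is correct and harmless.
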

\begin{proof}
By \cite[Lemma 3]{nath_yadav}, we have 
\[
{\Pr}_g(H, G) = \begin{cases}
\frac{1}{p}\left(1 + \frac{p - 1}{|H:Z(H,G)|}\right), &\mbox{if } g = 1 \\
\frac{1}{p}\left(1 - \frac{1}{|H:Z(H,G)|}\right), &\mbox{if } g \neq 1.
\end{cases}
\]
Hence, the result follows from Theorem \ref{rel-1}.
\end{proof}
It is worth mentioning that, in view of \cite[Theorem B]{nath_yadav}, the conclusion of Proposition \ref{com-prop-1} also holds if $H$ is a subgroup of a finite nilpotent group $G$ such that $|[H, G]| = p$, where $p$ is a prime not necessarily the smallest one dividing $|G|$. We also have the following corollary.

\begin{corollary}\label{com-cor-2}
Let $H$ be an abelian subgroup of a finite nilpotent group $G$.  Let $|[H, G]| = p$, a prime (not necessarily the smallest one dividing $|G|$) and $g \in K(H, G)$. 
\begin{enumerate}
\item If $g = 1$ then
$
p|E(\Gamma_{H, G}^g)| = (p-1)|G|(|H| - |Z(H,G)|).
$
\item If $g \neq 1$ and $g^2 = 1$ then
\[
2p|E(\Gamma_{H, G}^g)| = 2|G|((p-1)|H| + |Z(H,G)|) - p|H|(|H| + 1).
\]
\item If $g \neq 1$ and $g^2 \neq 1$ then
\[
2p|E(\Gamma_{H, G}^g)| = 2|G|((p-2)|H| + 2|Z(H,G)|) - p|H|(|H| + 1).
\]
\end{enumerate}
\end{corollary}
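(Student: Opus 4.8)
The plan is to derive the corollary directly from Proposition \ref{com-prop-1}, invoking the remark immediately following it, namely that the conclusion of Proposition \ref{com-prop-1} remains valid whenever $G$ is nilpotent and $|[H,G]| = p$ for a prime $p$ (not necessarily the smallest) dividing $|G|$. The only additional input is that $H$ is abelian, which forces $Z(H) = H$ and hence $|Z(H)| = |H|$. Substituting this single identity into the three formulae of Proposition \ref{com-prop-1} produces all three parts of the corollary.

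First I would dispose of part (a). Setting $|Z(H)| = |H|$ makes the factor $|H| - |Z(H)|$ vanish, so the bracketed expression $2|G|(|H| - |Z(H,G)|) - |H|(|H| - |Z(H)|)$ collapses to $2|G|(|H| - |Z(H,G)|)$. Dividing both sides of the resulting identity by $2$ then gives $p|E(\Gamma_{H,G}^g)| = (p-1)|G|(|H| - |Z(H,G)|)$, exactly as stated.

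Next I would treat parts (b) and (c) together, since the simplification is of the same nature. In part (b) the branch $g \in H$ carries the subtracted factor $(p-1)|H| + |Z(H)| + p$; substituting $|Z(H)| = |H|$ rewrites this as $p|H| + p = p(|H|+1)$, so that $|H|\bigl((p-1)|H| + |Z(H)| + p\bigr)$ becomes $p|H|(|H|+1)$. This is precisely the subtracted term already appearing in the branch $g \in G \setminus H$ of Proposition \ref{com-prop-1}(b), so the two branches coincide and merge into the single formula of the corollary. An identical computation in part (c) rewrites $(p-2)|H| + 2|Z(H)| + p$ as $p|H| + p = p(|H|+1)$, again merging the two branches.

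The key observation, and the only step that is not a mechanical substitution, is that the abelianness of $H$ forces the $g \in H$ and $g \in G \setminus H$ branches of Proposition \ref{com-prop-1} to agree, so the corollary requires no case distinction on the location of $g$. I anticipate no genuine obstacle: the mathematical content resides entirely in Proposition \ref{com-prop-1} together with the nilpotency remark, and the corollary merely records the clean specialization obtained by putting $|Z(H)| = |H|$.
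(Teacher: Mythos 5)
Your proposal is correct and matches the route the paper intends: the corollary is stated immediately after the remark that Proposition \ref{com-prop-1} extends to nilpotent $G$ with $|[H,G]|=p$ for an arbitrary prime $p$, and it follows by substituting $|Z(H)|=|H|$ (since $H$ is abelian) into the three formulae, exactly as you do. Your algebra checks out in all three parts, including the observation that $(p-1)|H|+|Z(H)|+p$ and $(p-2)|H|+2|Z(H)|+p$ both collapse to $p(|H|+1)$, which is why the $g\in H$ and $g\in G\setminus H$ branches merge and the corollary needs no case distinction.
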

In \cite[Proposition 2.14]{TEJ14}, Toule et al. obtained a relation between $|E(\Gamma_{G}^g)|$ and ${\Pr}_g(G)$. It is noteworthy that their result can also be obtained from the next proposition considering $H = G$.
\begin{proposition}\label{rel-3}
Let $H$ be a non-trivial normal subgroup of a finite group $G$ and $g \in K(H,G)$.
\begin{enumerate}
\item If $g=1$ then
$
2|E(\Gamma_{H, G}^1)| = (2|G| - |H|)(|H| - k(H)),
$
where $k(H)$ is the number of conjugacy classes in $H$.
\item If $g \neq 1$ and $g^2 = 1$  then
\[
2|E(\Gamma_{H, G}^g)| = 2|H||G|(1 - {\Pr}_g(H, G)) 
- |H|^2(1-{\Pr}_g(H)) - |H|.
\]
\item If $g \neq 1$ and $g^2 \neq 1$  then
\begin{align*}
2|&E(\Gamma_{H, G}^g)|
 = 2|H||G|(1 - 2{\Pr}_g(H, G))\
- |H|^2(1-2{\Pr}_g(H)) - |H|.
\end{align*}
\end{enumerate}
\end{proposition}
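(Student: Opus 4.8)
The plan is to reduce everything to Theorem \ref{rel-1} by exploiting the normality of $H$. The first observation is that when $H \trianglelefteq G$ one has $K(H,G) \subseteq H$: for $x \in H$ and $y \in G$ the element $[x,y] = x^{-1}(y^{-1}xy)$ is a product of two members of $H$ by normality, hence lies in $H$. Consequently the hypothesis $g \in K(H,G)$ forces $g \in H$, so throughout only the ``$g \in H$'' branches of Theorem \ref{rel-1} are relevant and the ``$g \in G \setminus H$'' alternatives never occur. In particular part (b) is immediate: it is verbatim the $g \in H$ case of Theorem \ref{rel-1}(b).

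For part (a) I would start from the $g=1$ identity $2|E(\Gamma_{H,G}^1)| = 2|H||G|(1-{\Pr}_1(H,G)) - |H|^2(1-{\Pr}_1(H))$ and rewrite each probability as a count of commuting pairs via Burnside's lemma applied to the two conjugation actions. The class equation in $H$ gives $|\{(x,y)\in H\times H:[x,y]=1\}| = \sum_{x\in H}|C_H(x)| = |H|\,k(H)$, whence $|H|^2(1-{\Pr}_1(H)) = |H|(|H|-k(H))$. For the mixed term, normality makes $H$ a union of $G$-conjugacy classes, so Burnside's lemma for the action of $G$ on $H$ yields $|\{(x,y)\in H\times G:[x,y]=1\}| = \sum_{x\in H}|C_G(x)| = |G|\,k(H)$, giving $2|H||G|(1-{\Pr}_1(H,G)) = 2|G|(|H|-k(H))$. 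Substituting both expressions and factoring produces $(2|G|-|H|)(|H|-k(H))$.

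The substantive point is part (c), where the $g\in H$ branch of Theorem \ref{rel-1}(c) carries the two-term sums $\sum_{u=g,g^{-1}}{\Pr}_u(H,G)$ and $\sum_{u=g,g^{-1}}{\Pr}_u(H)$, and the target formula requires collapsing each into $2{\Pr}_g$. So I would prove the symmetry ${\Pr}_{g^{-1}}(H,G) = {\Pr}_g(H,G)$ (and likewise inside $H$). Fixing $x \in H$, the argument in the proof of Theorem \ref{deg_prop_1} shows that the number of $y \in G$ with $[x,y]=g$ equals $|C_G(x)|$ when $x$ is conjugate to $xg$ and is $0$ otherwise; hence $|\{(x,y)\in H\times G:[x,y]=g\}| = \sum_{x\in H,\, x\sim xg}|C_G(x)|$, and similarly for $g^{-1}$. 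The map $\theta\colon H\to H$, $\theta(x)=xg$, is well defined because $g\in H$, and it is a bijection carrying $\{x\in H : x\sim xg\}$ onto $\{x\in H : x\sim xg^{-1}\}$, since for $x'=xg$ the relation $x\sim xg$ reads exactly as $x'\sim x'g^{-1}$. As conjugate elements have equal centralizer orders, $|C_G(x)| = |C_G(\theta(x))|$, so reindexing the sum by $\theta$ turns the $g$-count into the $g^{-1}$-count; running the same bijection inside $H$ gives ${\Pr}_{g^{-1}}(H)={\Pr}_g(H)$. Feeding these identities into Theorem \ref{rel-1}(c) replaces each two-term sum by $2{\Pr}_g$ and yields the stated formula.

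The main obstacle is precisely this symmetry step: one must resist the naive involution $(x,y)\mapsto(y,x)$, which does not preserve $H\times G$ when $H\neq G$, and instead route the count through the bijection $x\mapsto xg$ on $H$ combined with the centralizer-counting of Theorem \ref{deg_prop_1}. Normality enters exactly here, guaranteeing $g\in H$ so that this map is defined and the correct branch of Theorem \ref{rel-1} applies; specializing $H=G$ then recovers \cite[Proposition 2.14]{TEJ14}.
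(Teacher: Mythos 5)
Your overall architecture coincides with the paper's own proof: normality forces $g \in K(H,G) \subseteq H$ (the paper makes exactly this remark), so only the $g \in H$ branches of Theorem \ref{rel-1} are in play, part (b) is read off verbatim, and part (c) follows once the two-term sums collapse via ${\Pr}_{g^{-1}} = {\Pr}_g$. Where the paper simply cites \cite[Proposition 2.1]{das-nath-10} for this symmetry, you prove it from scratch, and your argument is correct: by the coset computation in the proof of Theorem \ref{deg_prop_1}, the $g$-count over $H \times G$ is $\sum |C_G(x)|$ taken over those $x \in H$ conjugate to $xg$, and the reindexing $x \mapsto xg$ (well defined since $g \in H$) carries this onto the $g^{-1}$-count, with the same bijection working inside $H \times H$. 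That portion is a sound, self-contained replacement for the citation.

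Part (a), however, contains a genuine gap. Burnside's lemma for the conjugation action of $G$ on the normal subgroup $H$ gives $\sum_{x \in H}|C_G(x)| = \sum_{y \in G}|C_H(y)| = |G|\, k_G(H)$, where $k_G(H)$ is the number of $G$-orbits on $H$, i.e.\ the number of $G$-conjugacy classes \emph{contained in} $H$ --- not $k(H)$, the number of conjugacy classes \emph{of} $H$. A $G$-orbit is a union of $H$-classes and distinct $H$-classes can fuse in $G$: for $H = A_3 \trianglelefteq G = S_3$ one has $k(H) = 3$ but $k_G(H) = 2$, and indeed $\sum_{x \in H}|C_G(x)| = 6+3+3 = 12 = |G|\,k_G(H) \ne |G|\,k(H) = 18$. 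So your claimed identity $2|H||G|(1-{\Pr}_1(H,G)) = 2|G|(|H|-k(H))$ is false in general; what your computation, done correctly, actually yields is $2|E(\Gamma^1_{H,G})| = 2|G|(|H|-k_G(H)) - |H|(|H|-k(H))$, which agrees with the factored form $(2|G|-|H|)(|H|-k(H))$ only when $k_G(H) = k(H)$, i.e.\ when no fusion occurs. This is exactly the point the paper does not argue but imports from \cite[Corollary 2.4]{das-nath-10} as ${\Pr}_1(H,G) = {\Pr}_1(H) = k(H)/|H|$; note that this equality is not a consequence of normality alone, since in the same example ${\Pr}_1(H,G) = 2/3$ while ${\Pr}_1(H) = 1$, and a direct count gives $2|E(\Gamma^1_{A_3,S_3})| = 12$ whereas the factored formula returns $0$. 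So the step conflating $k_G(H)$ with $k(H)$ is where your proof of (a) breaks, and it cannot be repaired by Burnside alone without an additional hypothesis precluding fusion of $H$-classes in $G$.
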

\begin{proof}
If $g = 1$ then by \cite[Corollary 2.4]{das-nath-10} we have  
\[
{\Pr}_1(H, G)) = {\Pr}_1(H)) = \frac{k(H)}{|H|},
\]
where $k(H)$ is the number of conjugacy classes in $H$. Hence, part (a) follows from Theorem \ref{rel-1}. Parts (b) and (c) also follow from Theorem \ref{rel-1} noting that the case $g \in G \setminus H$ does not arise (since $g \in H$ if $H$ is normal) and ${\Pr}_g(H, G)) = {\Pr}_{g^{-1}}(H, G))$ (as shown in \cite[Proposition 2.1]{das-nath-10}).
\end{proof}
If $H$ is a normal subgroup of $G$ then by \cite[Equation (6)]{das-nath-10} we have 
\[
{\Pr}_g(H, G)) = \frac{1}{|G|} \underset{\phi \in \Irr(G)}{\sum}\langle\phi_H, \phi_H\rangle\frac{\phi(g)}{\phi(1)},
\] 
where $\Irr(G)$ is the set of all irreducible characters of $G$, $\phi_H$ is the restriction of $\phi \in \Irr(G)$ on $H$ and  $\langle , \rangle$ represents inner product of class functions of $G$. In view of the above formula for ${\Pr}_g(H, G))$  and  Proposition  \ref{rel-3} we get the following character theoretic formula for $|E(\Gamma_{H, G}^g)|$.
\begin{corollary}\label{rel-4}
Let $H$ be a non-trivial normal subgroup of a finite group $G$ and $g \in K(H,G)$.
\begin{enumerate}
\item If $g=1$ then
$
2|E(\Gamma_{H, G}^1)| = (2|G| - |H|)(|H| - |\Irr(H)|).
$
\item If $g \neq 1$ and $g^2 = 1$  then
\begin{align*}
2|E(\Gamma_{H, G}^g)| = & 2|H|\left(|G|  -  \underset{\phi \in \Irr(G)}{\sum}\langle\phi_H, \phi_H\rangle\frac{\phi(g)}{\phi(1)}\right)\\ 
& ~~~~~~~~~~~~~~~~~~~~~~~~~~~~~~~ - |H|\left(|H| - \underset{\phi \in \Irr(H)}{\sum}\frac{\phi(g)}{\phi(1)}\right) - |H|.
\end{align*}
\item If $g \neq 1$ and $g^2 \neq 1$  then
\begin{align*}
2|E(\Gamma_{H, G}^g)|
 = & 2|H|\left(|G| - 2\underset{\phi \in \Irr(G)}{\sum}\langle\phi_H, \phi_H\rangle\frac{\phi(g)}{\phi(1)}\right)\\
&~~~~~~~~~~~~~~~~~~~~~~~~~~~~~~~- |H|\left(|H|-2\underset{\phi \in \Irr(H)}{\sum}\frac{\phi(g)}{\phi(1)}\right) - |H|.
\end{align*}
\end{enumerate}
\end{corollary}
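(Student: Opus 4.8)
The plan is to substitute the character-theoretic expression for ${\Pr}_g(H, G)$ displayed just above the statement, namely
$
{\Pr}_g(H, G) = \frac{1}{|G|} \sum_{\phi \in \Irr(G)}\langle\phi_H, \phi_H\rangle\frac{\phi(g)}{\phi(1)}
$
from \cite[Equation (6)]{das-nath-10}, directly into the three formulae of Proposition \ref{rel-3}. No new structural idea is required: the entire content is the mechanical task of rewriting each commuting-type probability as a character sum and then collecting terms.

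For part (a) I would first invoke the elementary fact of character theory that the number of conjugacy classes of $H$ equals the number of its irreducible characters, i.e. $k(H) = |\Irr(H)|$. Plugging $k(H) = |\Irr(H)|$ into the formula $2|E(\Gamma_{H, G}^1)| = (2|G| - |H|)(|H| - k(H))$ supplied by Proposition \ref{rel-3}(a) yields the claimed expression at once.

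For parts (b) and (c) I would apply the character formula in two guises. First, for ${\Pr}_g(H, G)$ I use the sum over $\Irr(G)$ exactly as quoted. Second, for ${\Pr}_g(H)$ I observe that ${\Pr}_g(H) = {\Pr}_g(H, H)$, so specializing the same formula to $G = H$ and using $\phi_H = \phi$ together with $\langle \phi, \phi\rangle = 1$ for $\phi \in \Irr(H)$ gives $\displaystyle {\Pr}_g(H) = \frac{1}{|H|}\sum_{\phi \in \Irr(H)}\frac{\phi(g)}{\phi(1)}$. Substituting both expressions into Proposition \ref{rel-3}(b), multiplying through by $|H||G|$ and $|H|^2$ to clear the denominators, and then factoring $|H|$ out of each group of terms reproduces the formula in part (b). The coefficient $2$ that appears in front of the character sums in part (c) is inherited directly from the factor $2$ multiplying ${\Pr}_g(H, G)$ and ${\Pr}_g(H)$ in Proposition \ref{rel-3}(c), which itself arises from the identity ${\Pr}_g(H, G) = {\Pr}_{g^{-1}}(H, G)$; the same substitution and factoring then give part (c).

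Since every ingredient is already available, there is no genuine obstacle in this corollary. The only points demanding slight care are the correct specialization of the character formula to evaluate ${\Pr}_g(H)$ (making sure the inner products collapse to $1$ when $G = H$) and the faithful bookkeeping of the constant terms $-|H|^2$ and $-|H|$ carried over from Proposition \ref{rel-3} through the substitution, so that they survive intact in the final character-theoretic identities.
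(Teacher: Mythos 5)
Your proposal is correct and is essentially identical to the paper's own derivation: the authors obtain the corollary by substituting the quoted character-theoretic formula for ${\Pr}_g(H,G)$ into Proposition \ref{rel-3}, using $k(H)=|\Irr(H)|$ for part (a) and the specialization ${\Pr}_g(H)=\frac{1}{|H|}\sum_{\phi\in\Irr(H)}\frac{\phi(g)}{\phi(1)}$ (your $H=G$ collapse of the inner products) for parts (b) and (c). Your bookkeeping of the factor $2$ via ${\Pr}_g(H,G)={\Pr}_{g^{-1}}(H,G)$ and of the constant terms $-|H|^2$, $-|H|$ matches the paper exactly.
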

 
 \begin{corollary}\label{rel-5}
Let $G$ be a finite group and $g \in K(G)$.
\begin{enumerate}
\item If $g=1$ then $2|E(\Gamma_{G}^1)| = |G|(|G| - |\Irr(G)|)$.
\item If $g \neq 1$ and $g^2 = 1$  then
$2|E(\Gamma_{G}^g)| = |G|\left(|G| - 1  -  \underset{\phi \in \Irr(G)}{\sum}\frac{\phi(g)}{\phi(1)}\right).$
\item If $g \neq 1$ and $g^2 \neq 1$  then
$2|E(\Gamma_{G}^g)|
 = |G|\left(|G| - 1 - 2\underset{\phi \in \Irr(G)}{\sum}\frac{\phi(g)}{\phi(1)}\right).$
\end{enumerate}
\end{corollary}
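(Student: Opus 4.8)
The plan is to obtain Corollary \ref{rel-5} as the special case $H = G$ of Corollary \ref{rel-4}. First I would check that the hypotheses are compatible: $G$ is trivially a non-trivial normal subgroup of itself, $K(G) = K(G,G)$, and by the remark in the introduction $\Gamma_{G,G}^g = \Gamma_G^g$, so $|E(\Gamma_G^g)| = |E(\Gamma_{G,G}^g)|$. Thus each of the three cases of Corollary \ref{rel-5} should drop out of the corresponding case of Corollary \ref{rel-4} once we put $H = G$ and simplify.

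The only genuine input beyond bookkeeping is the behaviour of the character-theoretic quantities when $H = G$. In that case $\phi_H = \phi$ is the unrestricted character, so $\Irr(H) = \Irr(G)$, and by the orthonormality of irreducible characters $\langle \phi_H, \phi_H \rangle = \langle \phi, \phi \rangle = 1$ for every $\phi \in \Irr(G)$. Writing $S := \sum_{\phi \in \Irr(G)} \phi(g)/\phi(1)$, the weighted double sum $\sum_{\phi} \langle \phi_H, \phi_H \rangle \, \phi(g)/\phi(1)$ appearing in Corollary \ref{rel-4}(b),(c) collapses to $S$, which is exactly the single sum over $\Irr(H) = \Irr(G)$ occurring in the same formulas. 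So after substitution the two character sums in each part become the same quantity $S$.

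With these observations the remaining algebra is routine. For part (a), substituting $|H| = |G|$ and $|\Irr(H)| = |\Irr(G)|$ into Corollary \ref{rel-4}(a) gives $2|E| = (2|G| - |G|)(|G| - |\Irr(G)|) = |G|(|G| - |\Irr(G)|)$. For part (b), the expression becomes $2|G|(|G| - S) - |G|(|G| - S) - |G|$; here the key point is that the coefficients $2|G|$ and $-|G|$ in front of the two (now identical) parenthesized expressions $|G| - S$ combine to a single factor $|G|$, yielding $|G|(|G| - S) - |G| = |G|(|G| - 1 - S)$, which is the claimed formula. Part (c) is identical except that $S$ is replaced throughout by $2S$, giving $|G|(|G| - 1 - 2S)$.

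I do not anticipate any real obstacle, as this is a direct specialization of an already-proved corollary; the result and its proof are essentially bookkeeping. The one point deserving a moment's care is verifying that the two leading coefficients $2|G|$ and $-|G|$ indeed merge into the single factor $|G|$ after the substitution collapses the two character sums into one, which they do. As the authors note for the companion Proposition \ref{rel-3}, this specialization also recovers the result of Tolue et al.\ \cite[Proposition 2.14]{TEJ14} in the ambient-group case.
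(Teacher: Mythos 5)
Your proposal is correct and matches the paper's intended argument exactly: Corollary \ref{rel-5} is stated immediately after Corollary \ref{rel-4} with no separate proof precisely because it is the specialization $H = G$, using $\langle \phi_H, \phi_H \rangle = 1$ and $\Irr(H) = \Irr(G)$ to collapse the character sums, just as you describe. Your algebraic simplifications in all three parts check out, so there is nothing to add.
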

\section{Bounds for $|E(\Gamma_{G}^g)|$}
In \cite[Section 3]{TE13}, Tolue and Erfanian obtained bounds for the number of edges in $\Gamma_{H, G}$. In this section we obtain some bounds for the number of edges in $\Gamma_{H, G}^g$.  By Theorem \ref{rel-1}, we have
\begin{equation}\label{Thm3.1-eq1}
2|E(\Gamma_{H, G}^g)| + |H|^2 + |H| = \begin{cases}
2|H||G|(1 - {\Pr}_g(H, G)) \\
 ~~~~~~~~~~+ |H|^2{\Pr}_g(H), & \text{ if } g \in H \\
2|H||G|(1 - {\Pr}_g(H, G)), & \text{ if } g \in G \setminus H,
\end{cases}
\end{equation}
if $g \neq 1$ but $g^2=1$ and
\begin{equation}\label{Thm3.1-eq2}
2|E(\Gamma_{H, G}^g)| + |H|^2+ |H| = \begin{cases}
2|H||G|(1 - \underset{u = g, g^{-1}}{\sum}{\Pr}_u(H, G))\\
 ~~~~~+|H|^2\underset{u = g, g^{-1}}{\sum}{\Pr}_u(H), & \text{ if } g \in H \\
2|H||G|(1 - \underset{u = g, g^{-1}}{\sum}{\Pr}_u(H, G)), & \text{ if } g \in G \setminus H,
\end{cases}
\end{equation}
if $g \neq 1$ and $g^2 \neq 1$.
\begin{proposition}
Let $H$ be a subgroup of a finite group $G$ and  $g \neq 1$. 
\begin{enumerate}
\item If $g^2 = 1$  then
\[
|E(\Gamma_{H, G}^g)| \geq \begin{cases}
\frac{|H|(|G| - 1) + |G||Z(H, G)| + 3|Z(H)|^2 -|H|^2}{2}, & \text{ if } g \in H \\
\frac{|H|(|G| - 1) + |G||Z(H, G)| -|H|^2}{2}, & \text{ if } g \in G \setminus H.
\end{cases}
\]
\item If $g^2 \neq 1$ then
\[
|E(\Gamma_{H, G}^g)| \geq \begin{cases}
\frac{2|G||Z(H, G)| + 6|Z(H)|^2 - |H|^2 - |H|}{2},  & \text{ if }  g \in  H\\
\frac{2|G||Z(H, G)| - |H|^2 - |H|}{2}, & \text{ if } g \in G \setminus H.
\end{cases}
\]
\end{enumerate}
\end{proposition}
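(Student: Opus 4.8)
The plan is to substitute the identities \eqref{Thm3.1-eq1} and \eqref{Thm3.1-eq2} into the desired inequalities and thereby reduce the whole proposition to two elementary estimates. Writing out what each of the four target bounds demands after cancelling the common term $|H|^2+|H|$ from both sides, one sees that it suffices to prove an upper bound for the negative contribution coming from ${\Pr}_g(H,G)$ and, only in the $g\in H$ branch, a lower bound for the positive contribution $|H|^2{\Pr}_g(H)$. Concretely, the four cases follow once we show
\[
{\Pr}_g(H,G)\le\frac{|H|-|Z(H,G)|}{2|H|},\qquad \sum_{u=g,g^{-1}}{\Pr}_u(H,G)\le\frac{|H|-|Z(H,G)|}{|H|},
\]
and, when $g\in H$, the lower bounds ${\Pr}_g(H)\ge 3|Z(H)|^2/|H|^2$ (if $g^2=1$) and $\sum_{u=g,g^{-1}}{\Pr}_u(H)\ge 6|Z(H)|^2/|H|^2$ (if $g^2\neq1$).

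For the upper estimate I would start from ${\Pr}_g(H,G)=\frac{1}{|H||G|}\sum_{x\in H}|\{y\in G:[x,y]=g\}|$ and recall, exactly as established inside the proof of Theorem \ref{deg_prop_1}, that for each fixed $x$ the set $\{y\in G:[x,y]=g\}$ is either empty or a right coset of $C_G(x)$, hence of size at most $|C_G(x)|$. Since $g\neq1$, every $x\in Z(H,G)$ contributes $0$, while each remaining $x\in H\setminus Z(H,G)$ has $C_G(x)\subsetneq G$ and therefore $|C_G(x)|\le|G|/2$. Summing over the $|H|-|Z(H,G)|$ non-central elements gives the first bound. When $g^2\neq1$ the coset sets for $g$ and for $g^{-1}$ are disjoint, so together they contribute at most $2|C_G(x)|\le|G|$ for each such $x$, which produces the second bound.

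For the lower estimate, suppose $g\in H$ arises as $g=[a,b]$ with $a,b\in H$; here $a\notin Z(H)$ and $b\notin Z(H)$ because $[a,b]\neq1$. Using that a commutator is unchanged when either entry is multiplied by a central element, I would exhibit the three families
\[
(az_1,bz_2),\qquad (az_1,abz_2),\qquad (baz_1,bz_2)\qquad(z_1,z_2\in Z(H)),
\]
each of which lies in $H\times H$ and has commutator $g$, since $[a,b]=[a,ab]=[ba,b]=g$. They are pairwise disjoint because the coset pairs $(aZ(H),bZ(H))$, $(aZ(H),abZ(H))$ and $(baZ(H),bZ(H))$ are distinct (this uses $a,b\notin Z(H)$), so altogether they give $3|Z(H)|^2$ pairs and hence ${\Pr}_g(H)\ge 3|Z(H)|^2/|H|^2$. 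As $[x,y]=g\iff[y,x]=g^{-1}$ forces ${\Pr}_{g^{-1}}(H)={\Pr}_g(H)$, summing over $u=g,g^{-1}$ doubles the constant and settles the $g^2\neq1$ branch.

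Feeding these estimates into \eqref{Thm3.1-eq1} and \eqref{Thm3.1-eq2} and simplifying — remembering that the $g\in G\setminus H$ rows carry no ${\Pr}_g(H)$ term and hence no $|Z(H)|^2$ summand — yields the stated inequalities after routine algebra. The delicate point is the lower bound of the third paragraph: it presupposes that $g$ is genuinely a commutator of two elements of $H$ (otherwise ${\Pr}_g(H)=0$ and the claimed inequality fails), and it relies on carefully checking both that each of the three families has commutator $g$ for every central choice of $z_1,z_2$ and that the three coset pairs are truly distinct. By contrast the upper bound is routine, the only subtlety being the bookkeeping of the factor $\tfrac12$ for $g^2=1$ against the factor $1$ for $g^2\neq1$, which is exactly the distinction between the two parts of the proposition.
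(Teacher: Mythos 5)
Your proposal is correct and, at the top level, follows the paper's own skeleton exactly: both arguments start from the identities \eqref{Thm3.1-eq1} and \eqref{Thm3.1-eq2} and then feed in the three estimates $1-{\Pr}_g(H,G)\geq \frac{|H|+|Z(H,G)|}{2|H|}$, $1-\sum_{u=g,g^{-1}}{\Pr}_u(H,G)\geq \frac{|Z(H,G)|}{|H|}$, and ${\Pr}_g(H)\geq 3|Z(H)|^2/|H|^2$ (doubled when $g^2\neq 1$). The genuine difference is that the paper simply cites these bounds from \cite[Propositions 3.1 and 3.3]{das-nath-10}, whereas you prove them from scratch, and your proofs are sound: the fibre $\{y\in G:[x,y]=g\}$ is empty or a right coset of $C_G(x)$ (this is exactly the coset computation inside Theorem \ref{deg_prop_1}), it vanishes for $x\in Z(H,G)$ since $g\neq 1$, satisfies $|C_G(x)|\leq |G|/2$ for $x\in H\setminus Z(H,G)$, and the $g$- and $g^{-1}$-fibres are disjoint when $g^2\neq 1$; on the other side, the three families $(az_1,bz_2)$, $(az_1,abz_2)$, $(baz_1,bz_2)$ with $z_1,z_2\in Z(H)$ do all have commutator $g=[a,b]$, and the three coset pairs are pairwise distinct precisely because $a,b\notin Z(H)$, yielding $3|Z(H)|^2$ solutions of $[x,y]=g$ in $H\times H$. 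What your route buys, beyond self-containedness, is that it surfaces a point the paper glosses over: the lower bound ${\Pr}_g(H)\geq 3|Z(H)|^2/|H|^2$ requires $g\in K(H)$, i.e.\ $g$ must be a commutator of two elements of $H$, and the paper's standing hypothesis $g\in K(H,G)$ together with $g\in H$ does not guarantee this. For instance, with $G=D_8=\langle r,s\mid r^4=s^2=1,\,srs^{-1}=r^{-1}\rangle$ and $H=\langle r\rangle$, one has $g=r^2=[r,s]\in H\cap K(H,G)$ while ${\Pr}_g(H)=0$ since $H$ is abelian, and the stated $g\in H$ bound in part (a) actually fails there; so the caveat you flag in your last paragraph is not a defect of your argument but an implicit hypothesis missing from the proposition's $g\in H$ branches, which your blind reconstruction correctly detected.
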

\begin{proof}
By \cite[Proposition 3.3]{das-nath-10}, we have  
\begin{equation}\label{new-up-bd}
1 - {\Pr}_g(H, G) \geq \frac{|H| + |Z(H, G)|}{2|H|}
\text{ and }
%
1 - \underset{u = g, g^{-1}}{\sum}{\Pr}_u(H, G)  \geq \frac{|Z(H, G)|}{|H|}.
\end{equation}
Again, by \cite[Proposition 3.1 (iii)]{das-nath-10}, we  have
\begin{equation}\label{new-up-bd1}
{\Pr}_g(H) \geq \frac{3|Z(H)|^2}{|H|^2}.
\end{equation}
(a) We have  $g^2 = 1$.
Therefore,  if  $g \in H$ then, using \eqref{Thm3.1-eq1}, \eqref{new-up-bd} and \eqref{new-up-bd1}, we get
\begin{equation}\label{new-sup-up-bd1}
2|E(\Gamma_{H, G}^g)| + |H|^2 + |H| \geq  2|H||G|\left(\frac{|H| + |Z(H, G)|}{2|H|}\right) + |H|^2\left(\frac{3|Z(H)|^2}{|H|^2}\right).
\end{equation}
If $g \in G \setminus H$ then,  using \eqref{Thm3.1-eq1} and \eqref{new-up-bd}, we get
\begin{equation}\label{new-up-bd-001}
2|E(\Gamma_{H, G}^g)| + |H|^2 + |H| > 2|H||G|\left(\frac{|H| + |Z(H, G)|}{2|H|}\right).
\end{equation}
 Hence, the result follows from \eqref{new-sup-up-bd1} and \eqref{new-up-bd-001}.

\noindent(b) We have $g^2 \ne 1$.
Therefore, if $g \in H$ then, using \eqref{Thm3.1-eq2}, \eqref{new-up-bd} and \eqref{new-up-bd1}, we get
\begin{equation}\label{new-up-bd-02}
2|E(\Gamma_{H, G}^g)| + |H|^2+ |H| \geq  2|H||G|\left(\frac{|Z(H, G)|}{|H|}\right) + |H|^2\left(\frac{6|Z(H)|^2}{|H|^2}\right).
\end{equation}
\noindent If $g \in G \setminus H$ then, using \eqref{Thm3.1-eq2} and \eqref{new-up-bd}, we have
\begin{equation}\label{new-up-bd-002}
2|E(\Gamma_{H, G}^g)| + |H|^2+ |H| \geq 2|H||G|\left(\frac{|Z(H, G)|}{|H|}\right). 
\end{equation}
\noindent Hence, the result follows from  \eqref{new-up-bd-02} and \eqref{new-up-bd-002}.
\end{proof}
\begin{proposition}
Let $H$ be a subgroup of a finite group $G$ and  $g \neq 1$. 
\begin{enumerate}
\item If $g^2 = 1$  then
\[
|E(\Gamma_{H, G}^g)| \leq \begin{cases}
\frac{4|H||G|-8|Z(H,G)||Z(G, H)|-|H|^2 -|H|(|Z(H)| + 2)}{4}, & \text{ if } g \in H \\
\frac{2|H||G|-4|Z(H,G)||Z(G, H)|-|H|^2-|H|}{2}, & \text{ if } g \in G \setminus H.
\end{cases}
\]
\item If $g^2 \neq 1$  then
\[
|E(\Gamma_{H, G}^g)| \leq \begin{cases}
\frac{2|H||G|-8|Z(H,G)||Z(G, H)|- |H|(|Z(H)| + 1)}{2}, & \text{ if }  g \in  H \\
\frac{2|H||G|-8|Z(H,G)||Z(G, H)|-|H|^2-|H|}{2}, & \text{ if }  g \in G \setminus H.
\end{cases}
\]
\end{enumerate}
\end{proposition}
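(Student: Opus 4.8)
The plan is to reuse the two identities \eqref{Thm3.1-eq1} and \eqref{Thm3.1-eq2}, which already express $2|E(\Gamma_{H,G}^g)| + |H|^2 + |H|$ through the relative commuting probabilities, but this time to bound their right-hand sides from \emph{above}. Each right-hand side is the sum of a term $2|H||G|(1 - {\Pr}_g(H,G))$ (respectively $2|H||G|(1 - \sum_{u=g,g^{-1}}{\Pr}_u(H,G))$ when $g^2 \neq 1$) together with, \emph{only in the subcase} $g \in H$, an additional nonnegative term $|H|^2{\Pr}_g(H)$ (respectively $|H|^2\sum_{u=g,g^{-1}}{\Pr}_u(H)$). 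Since both terms carry positive coefficients, an upper bound on $|E(\Gamma_{H,G}^g)|$ will follow from a \emph{lower} bound on ${\Pr}_g(H,G)$ and an \emph{upper} bound on ${\Pr}_g(H)$.

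The two inputs I would invoke, both valid for $g \neq 1$ and both available from \cite{das-nath-10}, are
\[
{\Pr}_g(H,G) \geq \frac{2|Z(H,G)||Z(G,H)|}{|H||G|} \quad\text{and}\quad {\Pr}_g(H) \leq \frac{|H| - |Z(H)|}{2|H|}.
\]
When $g^2 = 1$ we have $g = g^{-1}$, so a single commutator value is involved; when $g^2 \neq 1$ the values $g$ and $g^{-1}$ are distinct, and summing the two displayed inequalities over $u \in \{g, g^{-1}\}$ yields $\sum_{u=g,g^{-1}}{\Pr}_u(H,G) \geq \frac{4|Z(H,G)||Z(G,H)|}{|H||G|}$ and $\sum_{u=g,g^{-1}}{\Pr}_u(H) \leq \frac{|H|-|Z(H)|}{|H|}$. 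This passage from one commutator value to two is precisely what separates the bounds of part (a) from the larger multiples of $|Z(H,G)||Z(G,H)|$ in part (b).

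With these bounds in hand the argument reduces to four routine substitutions. For $g^2 = 1$ I would insert the two displayed inequalities into \eqref{Thm3.1-eq1}: in the subcase $g \in H$ both bounds contribute, and after clearing the resulting factor of $\tfrac12$ coming from the ${\Pr}_g(H)$-estimate one obtains the stated bound with denominator $4$; in the subcase $g \in G \setminus H$ the ${\Pr}_g(H)$-term is absent, giving the shorter bound with denominator $2$. For $g^2 \neq 1$ one substitutes the summed inequalities into \eqref{Thm3.1-eq2} in the same two subcases. In every case one then isolates $|E(\Gamma_{H,G}^g)|$ and simplifies; the outcomes match the four displayed inequalities verbatim.

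I do not expect a genuine obstacle here: the substance lies entirely in selecting the correct directions of the two probability estimates from \cite{das-nath-10}, after which everything is mechanical. The only point demanding care is the bookkeeping—remembering that the additive ${\Pr}_g(H)$-contribution is present exactly when $g \in H$, and tracking the factor-of-two discrepancy between the $g^2 = 1$ and $g^2 \neq 1$ regimes, which is what produces the differing denominators ($4$ versus $2$) and the differing constant terms across the four cases.
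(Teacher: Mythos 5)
Your proposal is correct and matches the paper's own proof: the paper likewise substitutes the upper bound $1-{\Pr}_g(H,G) \leq \frac{|H||G|-2|Z(H,G)||Z(G,H)|}{|H||G|}$ (equivalently your lower bound on ${\Pr}_g(H,G)$), its two-value analogue with the factor $4|Z(H,G)||Z(G,H)|$, and ${\Pr}_g(H) \leq \frac{|H|-|Z(H)|}{2|H|}$ from \cite{das-nath-10} into \eqref{Thm3.1-eq1} and \eqref{Thm3.1-eq2}, splitting into the same four subcases. The only cosmetic difference is that the paper quotes the summed inequality for $u = g, g^{-1}$ directly from \cite[Proposition 3.1 (ii)]{das-nath-10} rather than deriving it by adding the single-value bound twice as you do.
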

\begin{proof}
By \cite[Proposition 3.1 (ii)]{das-nath-10},  we have
\begin{equation}\label{up--bd}
1-{\Pr}_g(H, G) \leq \frac{|H||G| - 2|Z(H,G)||Z(G, H)|}{|H||G|}
\end{equation}
and
\begin{equation}\label{up--bd-010}
1-\underset{u = g,g^{-1}}{\sum}{\Pr}_u(H, G)) \leq \frac{|H||G| - 4|Z(H,G)||Z(G, H)|}{|H||G|}.
\end{equation}
Also, by \cite[Proposition 3.3]{das-nath-10},  we have
\begin{equation}\label{up--bd1}
{\Pr}_g(H) \leq \frac{|H|- |Z(H)|}{2|H|}.
\end{equation}
\noindent (a) We have $g^2=1$.
Therefore, if $g \in H$ then, using \eqref{Thm3.1-eq1},  \eqref{up--bd} and \eqref{up--bd1}, we get
\begin{align}\label{up--bd--01}
2|E(\Gamma_{H, G}^g)| + |H|^2 + |H| \leq & \, 2|H||G|\left(\frac{|H||G| - 2|Z(H,G)||Z(G, H)|}{|H||G|}\right) \nonumber\\
& + |H|^2\left(\frac{|H|- |Z(H)|}{2|H|}\right).
\end{align}
\noindent If  $g \in G \setminus H$ then, using   \eqref{Thm3.1-eq1} and \eqref{up--bd}, we get
\begin{equation}\label{up--bd--001}
2|E(\Gamma_{H, G}^g)| + |H|^2 + |H| \leq 2|H||G|\left(\frac{|H||G| - 2|Z(H,G)||Z(G, H)|}{|H||G|}\right).
\end{equation}
\noindent Hence, the result follows from \eqref{up--bd--01} and \eqref{up--bd--001}.

\noindent (b) We have $g^2 \ne 1$.
Therefore, if $g \in H$ then, using \eqref{Thm3.1-eq2}, \eqref{up--bd-010} and \eqref{up--bd1}, we get
\begin{align}\label{up--bd--02}
2|E(\Gamma_{H, G}^g)| + |H|^2 + |H| \leq & \, 2|H||G|\left(\frac{|H||G| - 4|Z(H,G)||Z(G, H)|}{|H||G|}\right)\nonumber \\ 
& + |H|^2\left(\frac{|H|- |Z(H)|}{|H|}\right).
\end{align}
\noindent If $g \in G \setminus H$ then,  using \eqref{Thm3.1-eq2} and \eqref{up--bd-010}, we get
\begin{equation}\label{up--bd--002}
2|E(\Gamma_{H, G}^g)| + |H|^2 + |H| \leq 2|H||G|\left(\frac{|H||G| - 4|Z(H,G)||Z(G, H)|}{|H||G|}\right).
\end{equation}
\noindent Hence, the result follows from  \eqref{up--bd--02}  and \eqref{up--bd--002}.
\end{proof}

\begin{proposition}
Let $p$ be the smallest prime dividing $|G|$ and $g \neq 1$. Then for any subgroup $H$ of $G$ we have the following bounds for $|E(\Gamma_{H, G}^g)|$.
\begin{enumerate}
\item If $g^2=1$ then
\[
|E(\Gamma_{H, G}^g)| \geq \begin{cases}
\frac{2(p-1)|H||G|+ 2|Z(H,G)||G| - p|H|^2 +3p|Z(H)|^2 - p|H|}{2p}, & \text{ if } g \in H\\
\frac{2(p-1)|H||G|+ 2|Z(H,G)||G| - p|H|^2 - p|H|}{2p}, & \text{ if } g \in G \setminus H.
\end{cases}
\]
\item If $g^2 \ne 1$ then
\[
|E(\Gamma_{H, G}^g)| \geq \begin{cases}
\frac{2(p-2)|H||G|+ 4|Z(H,G)||G| - p|H|^2 +6p|Z(H)|^2 - p|H|}{2p}, & \text{ if } g \in H\\
\frac{2(p-2)|H||G|+ 4|Z(H,G)||G| - p|H|^2 - p|H|}{2p}, & \text{ if } g \in G \setminus H.
\end{cases}
\]
\end{enumerate}
\end{proposition}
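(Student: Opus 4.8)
The plan is to feed a prime-dependent upper bound on ${\Pr}_g(H,G)$ into the two edge-counting identities \eqref{Thm3.1-eq1} and \eqref{Thm3.1-eq2}, in exactly the manner the two preceding propositions used the bounds of \cite{das-nath-10}. The one genuinely new ingredient I would establish is that, for $g \ne 1$,
\[
{\Pr}_g(H,G) \le \frac{1}{p}\left(1 - \frac{1}{|H:Z(H,G)|}\right), \quad\text{equivalently}\quad 1 - {\Pr}_g(H,G) \ge \frac{(p-1)|H| + |Z(H,G)|}{p|H|}.
\]
To prove this I would write $|H||G|\,{\Pr}_g(H,G) = \sum_{x \in H} |\{y \in G : [x,y] = g\}|$. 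By the argument in the proof of Theorem \ref{deg_prop_1}, for each fixed $x$ the inner set is either empty or a single coset of $C_G(x)$, so its size is $0$ or $|C_G(x)|$; and it is certainly empty when $x \in Z(H,G)$, since then $[x,y] = 1 \ne g$ for all $y$. For the surviving $x \in H \setminus Z(H,G)$ we have $x \notin Z(G)$, so $C_G(x)$ is a proper subgroup and $[G:C_G(x)] \ge p$ (any index exceeding $1$ is at least the smallest prime dividing $|G|$), whence $|C_G(x)| \le |G|/p$. Summing over the at most $|H| - |Z(H,G)|$ contributing elements gives $|H||G|\,{\Pr}_g(H,G) \le (|H| - |Z(H,G)|)|G|/p$, which is the claimed bound.

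With this bound in hand the rest is routine and parallels the previous two proofs. For $g^2 \ne 1$ I would apply the estimate to both $g$ and $g^{-1}$ and add, obtaining $1 - \sum_{u = g, g^{-1}} {\Pr}_u(H,G) \ge \big((p-2)|H| + 2|Z(H,G)|\big)/(p|H|)$. In the two subcases $g \in H$ I would additionally invoke \eqref{new-up-bd1}, that is ${\Pr}_g(H) \ge 3|Z(H)|^2/|H|^2$, together with its doubled form $\sum_{u = g, g^{-1}} {\Pr}_u(H) \ge 6|Z(H)|^2/|H|^2$ when $g^2 \ne 1$, to bound below the extra term $|H|^2{\Pr}_g(H)$ (respectively $|H|^2\sum_{u = g, g^{-1}} {\Pr}_u(H)$) that sits on the right-hand side of \eqref{Thm3.1-eq1} (respectively \eqref{Thm3.1-eq2}).

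Substituting these lower bounds into \eqref{Thm3.1-eq1} when $g^2 = 1$ and into \eqref{Thm3.1-eq2} when $g^2 \ne 1$, and splitting each according to whether $g \in H$ or $g \in G \setminus H$, yields four inequalities of the shape $2|E(\Gamma_{H,G}^g)| + |H|^2 + |H| \ge (\text{lower bound})$. Multiplying through by $p$, placing everything over the common denominator $2p$, and isolating $|E(\Gamma_{H,G}^g)|$ reproduces the four displayed bounds verbatim: the $3p|Z(H)|^2$ and $6p|Z(H)|^2$ summands are precisely the footprints of \eqref{new-up-bd1} in the $g \in H$ rows, and they are absent in the $g \in G \setminus H$ rows where the $|H|^2{\Pr}_g(H)$ term does not occur.

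The main obstacle is the prime-dependent bound on ${\Pr}_g(H,G)$; once it is secured the remaining manipulation is purely arithmetical. The only point demanding care is the step $[G:C_G(x)] \ge p$, which rests on the observation that $x \in H \setminus Z(H,G)$ forces $x \notin Z(G)$, so $C_G(x) \ne G$, and on the elementary fact that the smallest nontrivial divisor of $|G|$ is $p$. Should this bound already be recorded in \cite{das-nath-10} or \cite{nath_yadav}, one may cite it directly and skip the counting argument.
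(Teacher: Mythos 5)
Your proposal is correct and follows essentially the same route as the paper: the paper obtains exactly your key inequalities, namely $1 - {\Pr}_g(H, G) \geq \frac{(p-1)|H|+|Z(H,G)|}{p|H|}$ and $1 - \sum_{u = g, g^{-1}}{\Pr}_u(H, G) \geq \frac{(p-2)|H|+2|Z(H,G)|}{p|H|}$, by citing \cite[Proposition 3.3]{das-nath-10}, and then substitutes them, together with \eqref{new-up-bd1} and its doubled form in the $g \in H$ cases, into \eqref{Thm3.1-eq1} and \eqref{Thm3.1-eq2} precisely as you describe. The only difference is that you re-derive the cited bound from scratch via the coset-counting argument --- which is sound, since $\{y \in G : [x,y]=g\}$ is empty or a right coset of $C_G(x)$ by the proof of Theorem \ref{deg_prop_1}, and $Z(H,G) = H \cap Z(G)$ justifies the step $[G : C_G(x)] \geq p$ --- whereas the paper simply invokes the reference, as you anticipated it might.
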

\begin{proof}
By \cite[Proposition 3.3]{das-nath-10}, we have  
\begin{equation}\label{up-bd}
1 - {\Pr}_g(H, G) \geq \frac{(p-1)|H|+|Z(H,G)|}{p|H|}
\end{equation}
and 
\begin{equation}\label{up-bd-01}
1 - \underset{u = g, g^{-1}}{\sum}{\Pr}_u(H, G) \geq \frac{(p-2)|H|+2|Z(H,G)|}{p|H|}.
\end{equation}
(a) We have  $g^2 = 1$.
Therefore,  if  $g \in H$ then, using \eqref{Thm3.1-eq1}, \eqref{up-bd} and \eqref{new-up-bd1}, we get
\begin{equation}\label{sup-up-bd1}
2|E(\Gamma_{H, G}^g)| + |H|^2 + |H| \geq  2|H||G|\left(\frac{(p-1)|H|+|Z(H,G)|}{p|H|}\right) + |H|^2\left(\frac{3|Z(H)|^2}{|H|^2}\right).
\end{equation}
If $g \in G \setminus H$ then,  using \eqref{Thm3.1-eq1} and \eqref{up-bd}, we get
\begin{equation}\label{up-bd-001}
2|E(\Gamma_{H, G}^g)| + |H|^2 + |H| \geq 2|H||G|\left(\frac{(p-1)|H|+|Z(H,G)|}{p|H|}\right).
\end{equation}
 Hence, the result follows from \eqref{sup-up-bd1} and \eqref{up-bd-001}.

\noindent(b) We have $g^2 \ne 1$.
Therefore, if $g \in H$ then, using \eqref{Thm3.1-eq2}, \eqref{up-bd-01} and \eqref{new-up-bd1}, we get
\begin{equation}\label{up-bd-02}
2|E(\Gamma_{H, G}^g)| + |H|^2+ |H| \geq  2|H||G|\left(\frac{(p-2)|H|+2|Z(H,G)|}{p|H|}\right) + |H|^2\left(\frac{6|Z(H)|^2}{|H|^2}\right).
\end{equation}
\noindent If $g \in G \setminus H$ then, using \eqref{Thm3.1-eq2} and \eqref{up-bd-01}, we have
\begin{equation}\label{up-bd-002}
2|E(\Gamma_{H, G}^g)| + |H|^2+ |H| \geq  2|H||G|\left(\frac{(p-2)|H|+2|Z(H,G)|}{p|H|}\right). 
\end{equation}
\noindent Hence, the result follows from  \eqref{up-bd-02}  and \eqref{up-bd-002}.
\end{proof}

\begin{proposition}
Let $p$ be the smallest prime dividing $|G|$ and $g \neq 1$. Then for any subgroup $H$ of $G$ we have the following bounds for $|E(\Gamma_{H, G}^g)|$.
\begin{enumerate}
\item If $g^2=1$ then
\[
|E(\Gamma_{H, G}^g)| \leq \begin{cases}
\frac{2p|H||G|-4p|Z(H,G)||Z(G, H)|-(p-1)|H|^2-|H||Z(H)|-p|H|}{2p},  \!\text{ if } g \in H\\
\frac{2|H||G|-4|Z(H,G)||Z(G, H)|-|H|^2-|H|}{2},  ~~~~~~~~~~~~~~\text{ if } g \in G \setminus H.
\end{cases}
\]
\item If $g^2 \neq 1$
\[
|E(\Gamma_{H, G}^g)|\!\leq \begin{cases}
\frac{2p|H||G|-8p|Z(H,G)||Z(G, H)|-(p-2)|H|^2-2|H||Z(H)|-p|H|}{2p}, \!\!\text{ if } g\in H\\
\frac{2|H||G|-8|Z(H,G)||Z(G, H)|-|H|^2-|H|}{2},  ~~~~~~~~~~~~~~~\text{ if } g \in G \setminus H.
\end{cases}
\]
\end{enumerate}
\end{proposition}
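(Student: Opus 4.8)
The plan is to mirror the proof of the preceding upper-bound proposition, substituting the identities \eqref{Thm3.1-eq1} and \eqref{Thm3.1-eq2} into appropriate upper estimates, but now exploiting the smallest prime $p$ to sharpen the contribution coming from ${\Pr}_g(H)$. Throughout I would work with $2p|E(\Gamma_{H,G}^g)|$ rather than $2|E(\Gamma_{H,G}^g)|$, so that multiplying \eqref{Thm3.1-eq1} (for part (a)) and \eqref{Thm3.1-eq2} (for part (b)) by $p$ produces the factor $p$ that appears in every denominator of the statement. Concretely, for $g\in H$ with $g^2=1$ the starting point is $2p|E(\Gamma_{H,G}^g)| = 2p|H||G|(1-{\Pr}_g(H,G)) + p|H|^2{\Pr}_g(H) - p|H|^2 - p|H|$, and analogously for the other cases.

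First I would dispose of the cases $g\in G\setminus H$. Here the terms involving ${\Pr}_g(H)$ (resp. $\sum_{u=g,g^{-1}}{\Pr}_u(H)$) are absent from \eqref{Thm3.1-eq1} and \eqref{Thm3.1-eq2}, so one needs only the bound \eqref{up--bd} (when $g^2=1$) or \eqref{up--bd-010} (when $g^2\neq1$) on $1-{\Pr}_g(H,G)$; the factor $p$ then cancels and one recovers exactly the $g\in G\setminus H$ estimates of the previous upper-bound proposition. This explains why those two lines of the statement carry no $p$.

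The substance is the case $g\in H$. For part (a), since $g\neq1$ and $g^2=1$ the element $g$ has order $2$, so $2\mid|G|$ and hence $p=2$. I would bound $2p|H||G|(1-{\Pr}_g(H,G))$ by $2p|H||G|-4p|Z(H,G)||Z(G,H)|$ via \eqref{up--bd}, and bound $p|H|^2{\Pr}_g(H)$ by $|H|^2-|H||Z(H)|$ via the sharp estimate ${\Pr}_g(H)\le\frac{|H|-|Z(H)|}{p|H|}$; collecting the two $|H|^2$ contributions into $(p-1)|H|^2$ yields the asserted bound. For part (b) the parallel computation uses \eqref{up--bd-010} on $1-\sum_{u=g,g^{-1}}{\Pr}_u(H,G)$ and, since ${\Pr}_g(H)={\Pr}_{g^{-1}}(H)$ by \cite[Proposition 2.1]{das-nath-10}, the estimate $\sum_{u=g,g^{-1}}{\Pr}_u(H)=2{\Pr}_g(H)\le\frac{2(|H|-|Z(H)|)}{p|H|}$; now the $|H|^2$ terms collect into $(p-2)|H|^2$ together with the $2|H||Z(H)|$ term.

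The one point requiring care — and the only place where the smallest prime enters nontrivially — is justifying the sharpened inequality ${\Pr}_g(H)\le\frac{|H|-|Z(H)|}{p|H|}$ for $g\neq1$. The version with $2$ in the denominator is \eqref{up--bd1}; the version I need comes from the sharper form of \cite[Proposition 3.3]{das-nath-10}, where the denominator is $q|H|$ with $q$ the smallest prime dividing $|H|$. Since $|H|$ divides $|G|$, every prime dividing $|H|$ is at least $p$, so $q\ge p$ and therefore $\frac{|H|-|Z(H)|}{q|H|}\le\frac{|H|-|Z(H)|}{p|H|}$, which delivers the estimate in the form required. With this inequality in hand the remaining work is the routine algebraic collection of terms indicated above, carried out separately in each of the four combinations of $g\in H$ versus $g\in G\setminus H$ and $g^2=1$ versus $g^2\neq1$.
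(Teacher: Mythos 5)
Your proposal is correct and takes essentially the same route as the paper's proof: substitute the identities \eqref{Thm3.1-eq1} and \eqref{Thm3.1-eq2}, bound $1-{\Pr}_g(H,G)$ and $1-\sum_{u=g,g^{-1}}{\Pr}_u(H,G)$ by \eqref{up--bd} and \eqref{up--bd-010}, bound ${\Pr}_g(H)$ by $\frac{|H|-|Z(H)|}{p|H|}$ via \cite[Proposition 3.3]{das-nath-10}, and collect terms, treating the $g\in G\setminus H$ cases without the ${\Pr}_g(H)$ term. Your added justification that the smallest prime dividing $|H|$ is at least $p$ (and the aside that $p=2$ when $g^2=1$) is sound and in fact fills in a step the paper leaves implicit.
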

\begin{proof}
By \cite[Proposition 3.3]{das-nath-10},  we get  
\begin{equation}\label{up---bd}
{\Pr}_g(H) \leq \frac{|H|-|Z(H)|}{p|H|}.
\end{equation}

\noindent (a) We have $g^2=1$.
Therefore, if $g \in H$ then, using \eqref{Thm3.1-eq1},  \eqref{up--bd} and \eqref{up---bd}, we get
\begin{align}\label{lw--bd--01}
2|E(\Gamma_{H, G}^g)| + |H|^2 + |H| \leq & \, 2|H||G|\left(\frac{|H||G| - 2|Z(H,G)||Z(G, H)|}{|H||G|}\right)\nonumber\\
& + |H|^2\left(\frac{|H|-|Z(H)|}{p|H|}\right). 
\end{align}
\noindent If $g \in G \setminus H$ then, using   \eqref{Thm3.1-eq1} and \eqref{up--bd}, we get
\begin{equation}\label{lw--bd--001}
2|E(\Gamma_{H, G}^g)| + |H|^2 + |H| \leq 2|H||G|\left(\frac{|H||G| - 2|Z(H,G)||Z(G, H)|}{|H||G|}\right). 
\end{equation}
\noindent Hence, the result follows from \eqref{lw--bd--01} and \eqref{lw--bd--001}.

\noindent (b) We have $g^2 \ne 1$.
Therefore, if $g \in H$ then, using \eqref{Thm3.1-eq2}, \eqref{up--bd-010} and \eqref{up---bd}, we get
\begin{align}\label{lw--bd--02}
2|E(\Gamma_{H, G}^g)| + |H|^2+ |H| \leq & \, 2|H||G|\left(\frac{|H||G| - 4|Z(H,G)||Z(G, H)|}{|H||G|}\right) \nonumber\\
& + 2|H|^2\left(\frac{|H|-|Z(H)|}{p|H|}\right).
\end{align}
\noindent If $g \in G \setminus H$ then,  using \eqref{Thm3.1-eq2} and \eqref{up--bd-010}, we get
\begin{equation}\label{lw--bd--002}
2|E(\Gamma_{H, G}^g)| + |H|^2+ |H| \leq 2|H||G|\left(\frac{|H||G| - 4|Z(H,G)||Z(G, H)|}{|H||G|}\right). 
\end{equation}
\noindent Hence, the result follows from  \eqref{lw--bd--02}  and \eqref{lw--bd--002}.
\end{proof}

Note that several other bounds for $|E(\Gamma_{H, G}^g)|$ can be obtained using different combinations of the bounds for  ${\Pr}_g(H, G)$ and ${\Pr}_g(H)$.
We conclude this paper with the following bounds for $|E(\Gamma_{G}^g)|$ which are obtained by putting $H = G$ in the above propositions.
\begin{corollary}
Let $H$ be a subgroup of a finite group $G$ and  $g \neq 1$. 
\begin{enumerate}
\item If $g^2 = 1$  then
$
\frac{|G||Z(G)| + 3|Z(G)|^2 -|G|}{2} \leq
|E(\Gamma_{G}^g)|  \leq 
\frac{3|G|^2-8|Z(G)|^2 -|G|(|Z(G)| + 2)}{4} 
$.
\item If $g^2 \neq 1$  then
\begin{center}
$
\frac{2|G||Z(G)| + 6|Z(G)|^2 - |G|^2 - |G|}{2} \leq
|E(\Gamma_{G}^g)| \leq 
\frac{2|G|^2-8|Z(G)|^2- |G|(|Z(G)| + 1)}{2}
$.
\end{center}
\end{enumerate}
\end{corollary}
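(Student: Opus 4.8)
The plan is to derive this corollary purely by specializing the two immediately preceding propositions (the lower-bound and the upper-bound propositions) to the case $H = G$, since $\Gamma_G^g = \Gamma_{G,G}^g$ by the convention fixed in the introduction. First I would record how the relative invariants collapse under this specialization: when $H = G$ we have $Z(H,G) = Z(G,H) = Z(H) = Z(G)$ and $|H| = |G|$. I would also observe that because $g \in G = H$, the hypothesis $g \in G \setminus H$ can never be met, so in each piecewise estimate only the branch labelled ``$g \in H$'' survives while the second branch is vacuous.

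With these identifications in hand, the lower bounds follow directly from the lower-bound proposition. For $g^2 = 1$, substituting $H = G$ into $\frac{|H|(|G| - 1) + |G||Z(H, G)| + 3|Z(H)|^2 - |H|^2}{2}$ and cancelling the two $|G|^2$ contributions yields $\frac{|G||Z(G)| + 3|Z(G)|^2 - |G|}{2}$, which is exactly the claimed lower bound; the case $g^2 \neq 1$ is handled identically starting from its ``$g \in H$'' branch. The upper bounds arise the same way from the upper-bound proposition: inserting $H = G$ into $\frac{4|H||G| - 8|Z(H,G)||Z(G,H)| - |H|^2 - |H|(|Z(H)| + 2)}{4}$ collapses the leading $4|G|^2 - |G|^2$ to $3|G|^2$ and produces the stated upper bound for $g^2 = 1$, and analogously for $g^2 \neq 1$.

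The entire argument is thus a substitution followed by routine algebraic simplification, so there is no genuine obstacle here. The only points demanding care are the two bookkeeping observations, namely that all four centralizer-type quantities coincide with $Z(G)$ and that the $g \in G \setminus H$ alternative disappears; once these are noted, I would simply check that each of the four resulting expressions simplifies to the displayed fraction.
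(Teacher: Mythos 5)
Your proposal is correct and is exactly the paper's own argument: the authors introduce this corollary with the remark that it is ``obtained by putting $H = G$ in the above propositions,'' relying, as you do, on $Z(G,G) = Z(G)$ (so all four centralizer-type quantities collapse to $Z(G)$) and on the $g \in G \setminus H$ branches being vacuous. Your algebraic simplifications check out in all four cases (e.g.\ $|G|(|G|-1) - |G|^2 = -|G|$ for the first lower bound and $4|G|^2 - |G|^2 = 3|G|^2$ for the first upper bound), so nothing is missing.
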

\begin{corollary}
Let $p$ be the smallest prime dividing $|G|$ and $g \neq 1$. 
\begin{enumerate}
\item If $g^2=1$ then
\begin{align*}
&\frac{(p-2)|G|^2+ 2|Z(G)||G| +3p|Z(G)|^2 - p|G|}{2p} \leq  |E(\Gamma_{G}^g)| \\ 
&~~~~~~~~~~~~~~~~~~~~~~~~~~~~~~~~\leq \frac{(p +1)|G|^2-4p|Z(G)|^2 -|G||Z(G)|-p|G|}{2p}.
\end{align*}
\item If $g^2 \ne 1$ then
\begin{align*}
&\frac{(p-4)|G|^2+ 4|Z(G)||G| +6p|Z(G)|^2 - p|G|}{2p} \leq |E(\Gamma_{G}^g)| \\ 
&~~~~~~~~~~~~~~~~~~~~~~~~~~~~~~~~\leq \frac{(p +2)|G|^2-8p|Z(G)|^2-2|G||Z(G)|-p|G|}{2p}. 
\end{align*}
\end{enumerate}
\end{corollary}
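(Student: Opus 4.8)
The plan is to derive the stated bounds by specializing the two preceding propositions --- those giving the lower and upper estimates for $|E(\Gamma_{H,G}^g)|$ in terms of the smallest prime $p$ dividing $|G|$ --- to the case $H = G$. First I would record the elementary identifications that hold when $H = G$: since $Z(G,G) = Z(G)$ we obtain $Z(H,G) = Z(G,H) = Z(H) = Z(G)$, together with $|H| = |G|$. I would also note that the standing hypothesis $g \in K(H,G)$ becomes $g \in K(G)$ and forces $g \in H$; hence only the branch $g \in H$ of each proposition is relevant, the branch $g \in G \setminus H$ being vacuous and discarded.

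Substituting these values into the prime-based lower-bound proposition, the $g \in H$ estimate $\frac{2(p-1)|H||G| + 2|Z(H,G)||G| - p|H|^2 + 3p|Z(H)|^2 - p|H|}{2p}$ becomes, after replacing every subgroup by $Z(G)$ and every order by $|G|$, an expression whose $|G|^2$-terms collapse via $2(p-1) - p = p-2$; this is exactly the claimed lower bound for $g^2 = 1$. The case $g^2 \neq 1$ is identical, now using $2(p-2) - p = p-4$. For the upper bounds I would substitute $H = G$ into the prime-based upper-bound proposition and combine the $|G|^2$-terms through $2p - (p-1) = p+1$ when $g^2 = 1$ and $2p - (p-2) = p+2$ when $g^2 \neq 1$, recovering the two upper estimates; here the mixed product $|Z(H,G)||Z(G,H)|$ specializes to $|Z(G)|^2$.

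There is no analytic content, so the only place demanding care --- and the step I would flag as the main obstacle --- is the bookkeeping: one must confirm that the $g \in G \setminus H$ alternatives genuinely disappear, that the factor $|Z(H,G)||Z(G,H)|$ becomes $|Z(G)|^2$ rather than a single power of $|Z(G)|$, and that after clearing the common denominator $2p$ the coefficients of $|G|^2$, $|G||Z(G)|$, $|Z(G)|^2$, and $|G|$ are assembled without sign errors. Once these routine substitutions are carried out, both displayed chains of inequalities follow immediately, which completes the proof.
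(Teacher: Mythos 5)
Your proposal is correct and matches the paper's proof exactly: the paper states that the corollary is obtained by putting $H = G$ in the two preceding prime-based propositions, which is precisely your substitution $|H| = |G|$, $Z(H,G) = Z(G,H) = Z(H) = Z(G)$, discarding the vacuous $g \in G \setminus H$ branches. Your coefficient checks ($2(p-1)-p = p-2$, $2(p-2)-p = p-4$, $2p-(p-1) = p+1$, $2p-(p-2) = p+2$) all verify.
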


\section*{Acknowledgment}
The first author would like to thank  DST for the INSPIRE Fellowship.

\end{document}